\definecolor{DPurple}{rgb}{0.46,0.2,0.69}
\numberwithin{equation}{section}
\theoremstyle{definition}
\newtheorem{definition}{Definition}[section]
\theoremstyle{remark}
\newtheorem{remark}[definition]{Remark}
 \theoremstyle{plain}
\newtheorem{theorem}[definition]{Theorem}
\newtheorem{result}[definition]{Result}
\newtheorem{lemma}[definition]{Lemma}
\newtheorem{example}[definition]{Example}
\newtheorem{corollary}[definition]{Corollary}
\begin{document}

\title[Meromorphically normal families]{Meromorphically 
normal families and \\ a meromorphic Montel--Carath{\'e}odory theorem}

\author{Gopal Datt}
\address{Department of Mathematics, Indian Institute of Science, Bangalore 560012, India}
\email{gopaldatt@iisc.ac.in}


\keywords{Complex projective space, hyperplanes in general position, meromorphic mapping, meromorphic
normality}
\subjclass[2010]{Primary: 32A19, 32H04; Secondary: 32Q45}

\begin{abstract}
In this paper, we present various sufficient conditions for a family of meromorphic mappings on a
domain $D\subset \mathbb{C}^m$ into $\mathbb{P}^n$ to be meromorphically normal. Meromorphic
normality is a notion of sequential compactness in the meromorphic category introduced by
Fujimoto. We give a general condition for meromorphic normality that is influenced by Fujimoto's work.
The approach to proving this result allows us to establish meromorphic analogues of several recent
results on normal families of $\mathbb{P}^n$-valued holomorphic mappings. We also establish a
meromorphic version of the Montel--Carath{\'e}odory theorem.
\end{abstract}
\maketitle

\vspace{-0.6cm}
\section{Introduction and Main Results}\label{S:intro}

The work in this paper is influenced, chiefly, by two different types of results on
families of holomorphic or meromorphic mappings into
$n$-dimensional complex projective space, $\mathbb{P}^n$, $n\geq 2$.
The first influence is the work of Fujimoto, who introduced the
notion of {\it meromorphic convergence} (or {\it $m$-convergence}) for a 
sequence of meromorphic mappings from a domain $D\subset\mathbb{C}^m$ into
$\mathbb{P}^n$.
A family $\mathcal{F}$ of meromorphic mappings from 
$D\subset\mathbb{C}^m$ into $\mathbb{P}^n $ is said to be {\it meromorphically normal} if, roughly
speaking, any sequence in $\mathcal{F}$ has a subsequence $\{f_j\}$ with the property
that any point
$a\in D$ admits a neighborhood $U\ni a$ such that\,---\,fixing homogeneous coordinates
on $\mathbb{P}^n$\,---\,for each $f_j$ there is a reduced representation
$[f_{j,\,0}: f_{j,\,1}:\cdots: f_{j,\,n}]$ for $\left.f_j\right|_U$ and such that   
each $\{f_{j,\,l}\}$, $0\leq l\leq n$, converges compactly on $U$ 
to a holomorphic function $f_l$ and $f_l\not\equiv 0$ for at least one $l\in\{0, 1,\dots, n\}$.
Fujimoto called the latter notion of convergence {\it $m$-convergence}.
We shall discuss this notion, and what it means, more rigorously in 
Section~\ref{S:notions}. At this juncture, we merely remark that  $m$-convergence
is a more well-behaved mode of convergence than the notion of ``quasi-regular convergence''
introduced by Rutishauser~\cite{Rutishauser50}. Indeed, 
Fujimoto presented the following sufficient condition for meromorphic normality (among
other objectives) as an improvement of Rutishauser's result: 
 
\begin{result}[{\cite[Theorem~4.3]{Fujimoto74}}]\label{R: Fujithm1}
  Let $D$ be a domain in $\mathbb{C}^m$ and let $\mathcal{F}$ be a family of meromorphic mappings
  from $D$ into $\mathbb{P}^n$. Let $H_1,\dots, H_{2n+1}$ be $2n+1$
  hyperplanes in general position in $\mathbb{P}^n$ such that for each 
  $f\in\mathcal{F}$ and each $k\in\{1,\dots, 2n+1\}$, $ f(D)\not\subset H_k$. 
  Suppose that for each closed ball $\overline{B}\subset D$,
  the volumes of $f^{-1}(H_k)\cap \overline{B}$, viewing $f^{-1}(H_k)$ as divisors, 
  are uniformly bounded for
  $k=1,\dots, 2n+1$, and $f\in \mathcal{F}$.  
  Then the family $\mathcal{F}$ is meromoprhically normal.
\end{result}

In its general appearance\,---\,and in the use of the Kobayashi hyperbolicity of 
$\mathbb{P}^n\setminus\big(\cup_{k=1}^{2n+1}~H_k\big)$ ($H_1,\dots, H_{2n+1}$ are $2n+1$
hyperplanes in $\mathbb{P}^n$ as above) in its proof\,---\,the above result suggests the following natural
question: {\it Is there a version of the Montel--Carath{\'e}odory theorem
for families of meromorphic mappings from $D\subset \mathbb{C}^m$ into
$\mathbb{P}^n$?} To elaborate: the classical Montel--Carath{\'e}odory theorem states that  
a family of holomorphic $\mathbb{P}^1$-valued mappings on a planar domain is normal if this family omits
three fixed, distinct points in $\mathbb{P}^1$. This generalizes to higher dimensions: a family of holomorphic
$\mathbb{P}^n$-valued mappings on a domain $D\subset \mathbb{C}^m$ is normal if this family omits
$2n+1$ hyperplanes in $\mathbb{P}^n$ located in general position. This result is due to  
Dufresnoy~\cite{Dufresnoy} (also see Kiernan--Kobayashi \cite[Section~4]{Kiernan Kobayashi73}).
It is natural to ask whether, under the hypothesis of the latter result, a family of meromorphic
mappings into $\mathbb{P}^n$ is meromorphically normal. We answer this question in the
affirmative. This (see Corollary~\ref{C: Mon-Cara}) follows from a rather general criterion for
meromorphic normality. 
\smallskip

To motivate the criterion just alluded to, we must introduce the other result that
influences our work. To state this result, we need to introduce an essential quantity which\,---\,given
a collection of hyperplanes $H_1,\dots, H_q$ in $\mathbb{P}^n$, $q\geq n+1$,
in general position\,---\,quantifies in a
canonical way {\it to what extent} this collection is in general position. To this end, we   
fix a system of homogeneous coordinates $w= [w_0: w_1:\cdots: w_n]$ on $\mathbb{P}^n$,
whence any hyperplane $H$ in $\mathbb{P}^n$ can be given by
\begin{equation}\label{Eq: hyperplane}
  H := 
  \left\{[w_0: w_1:\cdots: w_n]\in\mathbb{P}^n\,\big| 
  \sum\nolimits_{l=0}^{n}a_{l}w_{l}=0\right\},
\end{equation}
where $(a_0, a_1,\dots, a_n)=:\alpha\in\mathbb{C}^{n+1}$ is a non-zero vector.
In particular, we can take $\alpha\in\mathbb{C}^{n+1}$ such that $\|\alpha\|=1$.
Let $H_1,\dots, H_{n+1}$ be hyperplanes in $\mathbb{P}^n$, and let 
$\alpha_k := \big(a_0^{(k)},\dots, a_n^{(k)}\big)$ be non-zero unit vectors in 
$\mathbb{C}^{n+1}$ such that for each $k\in\{1,\dots, n+1\}$, $H_k$ is given by
\begin{equation*}
  H_k :=
  \left\{[w_0: w_1:\cdots: w_n]\in\mathbb{P}^n\,\big| 
  \sum\nolimits_{l=0}^{n}a_l^{(k)}w_l=0\right\}.
\end{equation*}
We define
\begin{equation*}
  \mathcal{D}(H_1,\dots, H_{n+1}) :=
  \left|\det    
  \begin{bmatrix}
    \alpha_1 \\ 
    \vdots \\ 
    \alpha_{n+1}
  \end{bmatrix}
  \right|.
\end{equation*}
It is absolutely elementary to see\,---\,since $\alpha_1,\dots, \alpha_{n+1}$ are unit vectors\,---\,that
$\mathcal{D}(H_1,\dots, H_{n+1})$ depends only on $\{H_k : 1\leq k\leq n+1\}$ and is independent of the choice of
$\alpha_k$. Now let $H_1,\dots, H_q$ be hyperplanes in $\mathbb{P}^n$, where $q\geq n+1$. Set
\begin{equation*}
  D(H_1,\dots, H_q) := 
  \prod_{1\leq k_1<\dots<k_{n+1}\leq q}\mathcal{D}(H_{k_1},\dots, H_{k_{n+1}}).
\end{equation*}
Recall that the hyperplanes $H_1,\dots, H_q$ are {\it in general position} if $D(H_1,\dots, H_q)>0$.
\smallskip

We are now in a position to state the second result that influences the present work.
 
\begin{result}[{\cite[Theorem~2.10]{Yang Fang Pang 2014}}]\label{R: Yang Fang Pang 2014} 
  Let $D$ be a planar domain and $q\geq 3n+1$ be an integer. Let $\mathcal{F}$ and $\mathcal{G}$ 
  be two distinct families of holomorphic mappings from $D$ into $\mathbb{P}^n$.
  Suppose that the following conditions are satisfied:
  \begin{enumerate}
    \item[$(a)$] For each $f\in\mathcal{F}$, there exist a $g\in\mathcal{G}$ and
    hyperplanes $H_{1,\,f},\dots, H_{q,\,f}$ (which may depend on $f$)
    such that $f$ and $g$ share $H_{k,\,f}$ on $D$, i.e., $f^{-1}(H_{k,\,f})=g^{-1}(H_{k,\,f})$
    for $k= 1,\dots, q$;
  
    \item[$(b)$] $\inf\{D(H_{1,\,f},\dots, H_{q,\,f}): f\in\mathcal{F}\}>0$; and
  
    \item[$(c)$] The family $\mathcal{G}$ is normal.
  \end{enumerate}
  Then the family $\mathcal{F}$ is normal.
\end{result}

Two natural questions arise immediately in connection with the above result:
\begin{itemize}
  \item Given the close connection between the normality of a family of holomorphic mappings
  from a domain $D$ into some subset of $\mathbb{P}^n$ and the Kobayashi hyperbolicity
  of the latter subset, could the number of hyperplanes featured
  in Result~\ref{R: Yang Fang Pang 2014} be reduced and yet yield the same conclusion?
  \item Does a version of Result~\ref{R: Yang Fang Pang 2014} hold true for
  the domain $D\subset \mathbb{C}^m$, $m\geq 2$?
\end{itemize}
It turns out that the first question is slightly {\bf naive} since the hypothesis of
Result~\ref{R: Yang Fang Pang 2014} is absolutely lacking in information on the
extent to which $f(D)$ avoids $H_{k,\,f}$, $f\in \mathcal{F}$, $1\leq k\leq q$.
Moreover, Yang {\it et al.} show that the number of hyperplanes featured
in Result~\ref{R: Yang Fang Pang 2014} cannot, in general, be taken to be less than
$3n+1$: see \cite[Example~1]{Yang Fang Pang 2014}. As for the second
question: Yang, Liu and Pang in \cite{Yang Liu Pang 2016} present several improvements
to Result~\ref{R: Yang Fang Pang 2014}, one of which implies a version of the
latter result for mappings in $m$ variables $m\in \mathbb{N}$.
\smallskip

The last two observations led us to look for a variant of Result~\ref{R: Yang Fang Pang 2014}
wherein one considers a family $\mathcal{F}$ of {\it meromorphic} maps from
$D\subset \mathbb{C}^m$, $m\in \mathbb{N}$, into $\mathbb{P}^n$ and establishes
the meromorphic normality of $\mathcal{F}$. In this enterprise, we were influenced by
Result~\ref{R: Fujithm1}. {\bf Loosely} speaking, if we augment the hypothesis of
Result~\ref{R: Yang Fang Pang 2014} by requiring that  
$f(D)\not\subset H_{k,\,f}$, $f\in \mathcal{F}$\,---\,as in
Result~\ref{R: Fujithm1}\,---\,then one only needs to
consider $2n+1$ hyperplanes in $\mathbb{P}^n$ to play the role analogous to the above result.
These are the motivations for our first theorem: i.e., Theorem~\ref{T: mainthm2n+1} below.
Of course, given that $f\in \mathcal{F}$ may possess a non-empty indeterminacy locus, it is not
immediately clear what $f(D)\not\subset H_{k,\,f}$ means. We shall defer the explanation to the
end of this section.
It turns out that the proof of our first theorem opens up an approach to
other new results\,---\,with simpler, perhaps more attractive hypotheses\,---\,which we
shall present right after we state: 
                          
\begin{theorem}\label{T: mainthm2n+1}
  Let $D$ be a domain in $\mathbb{C}^m$.
  Let $\mathcal{F}$ be a family of meromorphic mappings from $D$ into $\mathbb{P}^n$,
  $\mathcal{G}$ a family of holomorphic mappings from $D$ into $\mathbb{P}^n$,   
  and $\mathcal{H}$ a collection of hyperplanes in $\mathbb{P}^n$. 
  Suppose that, for each $f\in \mathcal{F}$, there
  exist a $g_f\in\mathcal{G}$ and a subset 
  $\{H_{1,\,f},\ldots,\,H_{2n+1,\,f}\}\subset \mathcal{H}$ such that 
  \begin{enumerate}
    \item[$(a)$]  $f(D)\not\subset H_{k,\,f}$ for $k=\,1,\ldots,\,2n+1$;
  
    \item[$(b)$] $\inf\{D(H_{1,\,f},\dots, H_{2n+1,\,f}): f\in\mathcal{F}\}>0$; and
  
    \item[$(c)$] $\mathsf{supp}\,\nu(f, H_{k,\,f})\subseteq g_f^{-1}(H_{k,\,f})$
     for $k=\,1,\ldots,\,2n+1$.    
  \end{enumerate}
  Furthermore, assume that for any mapping in the closure
  of $\mathcal{G}$, its range is not a subset of
  any hyperplane in the closure of $\mathcal{H}$.
  If the family $\mathcal{G}$ is normal, then $\mathcal{F}$ is meromorphically normal.
\end{theorem}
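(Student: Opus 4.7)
The plan is to model the proof after Fujimoto's treatment of Result~\ref{R: Fujithm1}, adapted to accommodate hyperplanes that vary with $f\in \mathcal{F}$. Fix an arbitrary sequence $\{f_j\}\subset \mathcal{F}$ and, for each $j$, retain the associated $g_j\in \mathcal{G}$ and hyperplanes $H_{k,j}\in \mathcal{H}$, $k\in\{1,\ldots, 2n+1\}$, provided by the hypotheses. Using the normality of $\mathcal{G}$ and, separately, the sequential compactness of the unit sphere in $\mathbb{C}^{n+1}$ (applied to the unit-vector representatives $\alpha_{k,j}$ of the $H_{k,j}$), pass to a single subsequence along which $g_j\to g_\infty$ compactly on $D$ and $\alpha_{k,j}\to \alpha_{k,\infty}$ for each $k$, producing limit hyperplanes $H_{k,\infty}$. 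Condition $(b)$ together with the continuity of $D(\cdot)$ on tuples of unit vectors yields $D(H_{1,\infty},\ldots, H_{2n+1,\infty})>0$, and the closure hypothesis of the theorem ensures $g_\infty(D)\not\subset H_{k,\infty}$ for each $k$; consequently every $g_\infty^{-1}(H_{k,\infty})$ is a proper analytic subset of $D$.

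The next step is to produce a uniform bound on the $(2m-2)$-volumes of $g_j^{-1}(H_{k,j})\cap K$ for any compact $K\Subset D$. Working on a small neighborhood $U$ of $K$ on which $g_\infty$, and eventually $g_j$, admits a reduced representation $\widetilde{g_\infty}=(g_{\infty,0},\ldots,g_{\infty,n})$, form $\psi_{k,j}:=\langle \alpha_{k,j},\widetilde{g_j}\rangle$. These are holomorphic functions on $U$ with $\psi_{k,j}^{-1}(0)=g_j^{-1}(H_{k,j})\cap U$ and $\psi_{k,j}\to\psi_{k,\infty}\not\equiv 0$ compactly; a standard Lelong-current/Hurwitz-type convergence of zero divisors delivers the desired uniform volume bound on $K$. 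By condition $(c)$, the bound propagates to the support $\mathsf{supp}\,\nu(f_j, H_{k,j})\cap K$.

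With this support-level control in hand, I would follow the blueprint of Fujimoto's proof of Result~\ref{R: Fujithm1}, modified for varying hyperplanes. Locally around any $a\in D$, normalize reduced representations $f_j=[f_{j,0}:\cdots:f_{j,n}]$ by $\max_l\sup_{\overline{B}}|f_{j,l}|=1$ on a small closed ball $\overline{B}\ni a$, and pass to a subsequence with $f_{j,l}\to\widehat f_l$. The only obstruction to $m$-convergence is the simultaneous vanishing $\widehat f_l\equiv 0$ for all $l$. This I would rule out using the hyperbolic imbedding of $\mathbb{P}^n\setminus\bigcup_{k=1}^{2n+1}H_{k,\infty}$ (valid since the $H_{k,\infty}$ are $2n+1$ hyperplanes in general position, by Dufresnoy and Kiernan--Kobayashi): on $B\setminus \bigcup_k g_\infty^{-1}(H_{k,\infty})$, condition $(c)$ forces $f_j$ (off its indeterminacy set) to map into $\mathbb{P}^n\setminus\bigcup_k H_{k,j}$ for large $j$, and the distance-decreasing property of the Kobayashi pseudometric gives local equicontinuity of $f_j$ in the Fubini--Study metric. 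A Noguchi--Ochiai/Fujimoto-style meromorphic extension across the proper analytic set $\bigcup_k g_\infty^{-1}(H_{k,\infty})$ then precludes the total collapse of the $\widehat f_l$ and secures $m$-convergence near $a$. A diagonal argument over a countable exhaustion of $D$ by such neighborhoods finishes the proof.

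The main obstacle is the last step: turning the set-theoretic containment supplied by $(c)$ into an effective \emph{no-collapse} statement for the normalized reduced representations. The difficulty is compounded by the fact that the hyperplanes $H_{k,j}$ vary with $j$, so one must verify that the hyperbolic-imbedding and extension arguments, which are classically stated for fixed hyperplanes, are uniform enough under the convergence $H_{k,j}\to H_{k,\infty}$ guaranteed by $(b)$, and one must handle the indeterminacy loci of the meromorphic $f_j$ carefully throughout.
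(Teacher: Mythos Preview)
Your outline diverges from the paper's proof in the core normality argument on $D\setminus E$ (where $E=\bigcup_k g_\infty^{-1}(H_{k,\infty})$). The paper does not pursue volume bounds or a direct hyperbolic-imbedding/Kobayashi-metric argument. Instead, after observing that $\left.f_j\right|_{U_{z_0}}$ is holomorphic and omits all $H_{k,j}$ on small $U_{z_0}\subset D\setminus E$, it argues by contradiction via the Aladro--Krantz version of Zalcman's rescaling lemma (Lemma~\ref{L: AK}): non-normality would produce a non-constant entire curve $h:\mathbb{C}\to\mathbb{P}^n$ as a compact limit of rescalings of the $f_j$. Hurwitz then forces, for each $k$, either $h(\mathbb{C})\subset H_{k,\infty}$ or $h(\mathbb{C})\cap H_{k,\infty}=\emptyset$; with $J$ the set of indices in the first case and $Z=\bigcap_{k\in J}H_{k,\infty}$, a dimension count plus Eremenko's Picard-type theorem (Lemma~\ref{L: Eremenko-hyperbolic}) applied with $X=Z$ and $N=\dim_{\mathbb{C}}Z$ yields the contradiction. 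The extension across $E$ is then handled not by a generic Noguchi--Ochiai style argument but by a concrete device: an auxiliary map $F_j$ into $\mathbb{P}^{n+1}$ obtained by adjoining $f_j^*H_{k_0,j}$ as an extra homogeneous coordinate, to which Fujimoto's extension criterion (Lemma~\ref{L: Fujimotomeronormalprop}) is applied twice.

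Your route could in principle be completed, but the obstacle you yourself flag is real and is exactly what the paper's method sidesteps. Your Kobayashi-equicontinuity step requires a \emph{uniform} hyperbolic imbedding of the varying complements $\mathbb{P}^n\setminus\bigcup_k H_{k,j}$, not merely of the limit complement; this does hold under condition~$(b)$, but it is an additional ingredient you would have to invoke or prove separately. Your volume-bound paragraph is also a detour: once you know $f_j$ omits $H_{k,j}$ on $U_{z_0}$ (which follows directly from $(c)$ and convergence of the zero sets $g_j^{-1}(H_{k,j})$, without any volume estimate), you already have what is needed for either your Kobayashi argument or the paper's Zalcman--Eremenko argument. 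The latter is cleaner because, after rescaling, one works only with the fixed limit hyperplanes $H_{k,\infty}$, and the varying-target difficulty evaporates.
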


The object $\mathsf{supp}\,\nu(f, H_{k,\,f})$ refers to the support of a certain 
non-negative divisor, whose precise definition is given in Section~\ref{S:notions}. If
$f$ were holomorphic, then the set $\mathsf{supp}\,\nu(f, H_{k,\,f})$ would
equal $f^{-1}(H_{k,\,f})$. Condition~$(c)$ should be thought of as 
``$g_f^{-1}(H_{k,\,f}) \supseteq f^{-1}(H_{k,\,f})$'', but
as $f\in \mathcal{F}$, in general, has a
non-empty indeterminacy locus, our condition needs to be stated with care. 
The collections $\mathcal{G}$ and $\mathcal{H}$ above are understood to be large. Indeed,
the larger these collections are, the less restrictive are the constraints that they impose
on $\mathcal{F}$. We shall see that the condition on the pair $(\mathcal{G}, \mathcal{H})$
towards the end of the statement of  Theorem~\ref{T: mainthm2n+1} is needed due
to $\mathcal{G}$ and $\mathcal{H}$ being large\,---\,see Theorem~\ref{T: finitefamily}
below. The reader may ask whether one could do without this condition. 
However, the condition on the pair $(\mathcal{G}, \mathcal{H})$
stated above is essential, as we show via Example~\ref{Ex: essential-sharp} below.
\smallskip

If we replace the family $\mathcal{G}$ in the above theorem by a finite collection
and let the collection of hyperplanes be discrete, then it turns out that\,---\,just following
the approach of the proof of Theorem~\ref{T: mainthm2n+1}\,---\,we may now allow
$\mathcal{G}$ to comprise meromorphic mappings. This leads to a theorem in a similar
spirit as Theorem~\ref{T: mainthm2n+1}, but with a more attractive statement\,---\,in
that it involves simpler auxiliary objects and requires fewer conditions to be checked.

\begin{theorem}\label{T: finitefamily}
  Let $D$ be a domain in $\mathbb{C}^m$.
  Let $\mathcal{F}$ be a family of meromorphic  mappings from  $D$ into $\mathbb{P}^n$ and $\mathcal{G}$
  a finite family of meromorphic mappings from $D$ into $\mathbb{P}^n$. 
  Let $\mathcal{H}$ be a discrete collection of hyperplanes in $\mathbb{P}^n$.
  Suppose that for each $f\in \mathcal{F}$, there exist a $g_f\in\mathcal{G}$ and a subset 
   $\{H_{1,\,f},\ldots,\,H_{2n+1,\,f}\}\subset \mathcal{H}$ such that 
  \begin{enumerate}
    \item[$(a)$]  $f(D)\not\subset H_{k,\,f}$ for $k=\,1,\ldots,\,2n+1$;
 
    \item[$(b)$] The hyperplanes $H_{1,\,f},\ldots,\,H_{2n+1,\,f}$ are in general position; and
 
    \item[$(c)$] $\mathsf{supp}\,\nu(f, H_{k,\,f})\subseteq \mathsf{supp}\,\nu(g_f, H_{k,\,f})$
     for $k=\,1,\ldots,\,2n+1$.  
  \end{enumerate}
  If $g(D)\not\subset H$ for every $g\in \mathcal{G}$ and $H\in \mathcal{H}$,
  then $\mathcal{F}$ is meromorphically normal.
\end{theorem}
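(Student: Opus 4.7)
I would follow the approach used in the proof of Theorem~\ref{T: mainthm2n+1}, replacing the two compactness extractions there by elementary combinatorial ones made available by the present hypotheses. Given a sequence $\{f_j\}\subset\mathcal{F}$, choose $g_j:=g_{f_j}\in\mathcal{G}$ and $(H_{1,j},\ldots,H_{2n+1,j})\subset\mathcal{H}$ as provided. Since $\mathcal{G}$ is finite, the pigeonhole principle produces a subsequence (still denoted $\{f_j\}$) along which $g_j=g\in\mathcal{G}$ is independent of $j$. The compactness of the dual projective space $(\mathbb{P}^n)^{*}$ and the discreteness of $\mathcal{H}$ together yield, after a further $(2n+1)$-fold extraction, hyperplanes $H_k\in\mathcal{H}$, $1\leq k\leq 2n+1$, with $H_{k,j}=H_k$ for all sufficiently large $j$. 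After these extractions the sequence $\{f_j\}$ is paired with a single meromorphic $g$ and a fixed tuple $H_1,\ldots,H_{2n+1}\in\mathcal{H}$: in general position by~(b), containing neither $f_j(D)$ (by~(a)) nor $g(D)$ (by the standing hypothesis), and with~(c) rewritten as the $j$-independent inclusion
\[
\mathsf{supp}\,\nu(f_j, H_k)\;\subseteq\;\mathsf{supp}\,\nu(g, H_k)\qquad(1\leq k\leq 2n+1).
\]

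Set $A:=\bigcup_{k=1}^{2n+1}\mathsf{supp}\,\nu(g, H_k)\subset D\setminus I_g$, a fixed analytic hypersurface, and $D':=D\setminus(A\cup I_g)$. The display above shows that each $f_j$ sends $D'\setminus I_{f_j}$ into $\mathbb{P}^n\setminus\bigcup_{k=1}^{2n+1}H_k$, which is hyperbolically embedded in $\mathbb{P}^n$ by the theorem of Dufresnoy and Kiernan--Kobayashi invoked in the introduction. The Kwack--Kobayashi big Picard extension theorem then extends $f_j|_{D'\setminus I_{f_j}}$ holomorphically across the codimension-two set $I_{f_j}\cap D'$, so that each $f_j$ is in fact holomorphic on $D'$ with image avoiding $\bigcup_{k}H_k$. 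The holomorphic Montel--Carath{\'e}odory theorem, mentioned in the introduction, then supplies a subsequence $\{f_{j_l}\}$ converging compactly on $D'$ to a holomorphic map $f\colon D'\to\mathbb{P}^n$.

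The main obstacle, and the final step, is to upgrade this compact convergence on the open set $D'$ to $m$-convergence on all of $D$: one needs reduced representations of $\{f_{j_l}\}$ on neighbourhoods of points of $A\cup I_g$ that converge compactly to a reduced representation of a meromorphic extension of $f$. I would carry this out near points of $A$ by working on a small polydisc $U$ on which $g$ has a reduced representation $(g_0,\ldots,g_n)$ and each $f_{j_l}$ admits a reduced representation $(f_{j_l,0},\ldots,f_{j_l,n})$. Condition~(c) combined with $g(D)\not\subset H_k$ supplies uniform control over the zero sets of the inner products $\alpha_k\cdot F_{j_l}$ (with $\alpha_k$ a unit defining vector for $H_k$); together with the already-established compact convergence on $U\setminus A$ this allows a suitable scalar normalisation to make the reduced representations converge compactly on $U$. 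This is the extension step at the heart of the proof of Theorem~\ref{T: mainthm2n+1}; once it is in place, the subsequence $\{f_{j_l}\}$ is $m$-convergent on $D$, and $\mathcal{F}$ is therefore meromorphically normal.
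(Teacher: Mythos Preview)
Your initial extractions (a fixed $g$ via pigeonhole, fixed hyperplanes $H_k$ via discreteness) match the paper exactly. For the middle step---obtaining a compactly convergent subsequence off the bad set---you replace the paper's Zalcman--Eremenko argument (Lemmas~\ref{L: AK} and~\ref{L: Eremenko-hyperbolic}) by the hyperbolic embedding of $\mathbb{P}^n\setminus\bigcup_k H_k$ in $\mathbb{P}^n$ and the ensuing Montel-type normality. This alternative is valid and arguably cleaner. One small correction: your claim $A\subset D\setminus I(g)$ is backwards. At any point of $I(g)$ every component of a reduced representation of $g$ vanishes, so $I(g)\subset\mathsf{supp}\,\nu(g,H_k)$ for each $k$ and hence $I(g)\subset A$. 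Your big-Picard step is then superfluous, since the same reasoning gives $I(f_j)\subset\mathsf{supp}\,\nu(f_j,H_k)\subset A$, so each $f_j$ is already holomorphic on $D\setminus A$.

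The real gap is in your final paragraph. You treat the extension from $D\setminus A$ to $D$ as if it were identical to the extension step in Theorem~\ref{T: mainthm2n+1}, but it is not. In that theorem $g$ is \emph{holomorphic}, so $\bigcap_k S_k=\emptyset$ and every point of $D$ has a neighbourhood disjoint from some $S_{k_0}$; this is what allows the $\mathbb{P}^{n+1}$-construction plus Lemma~\ref{L: Fujimotomeronormalprop} to cover all of $D$. Here $g$ is merely meromorphic, and the paper shows that $\bigcap_{k\in L} S_k=I(g)$, which need not be empty. At a point $z_1\in I(g)$ \emph{every} $S_k$ passes through $z_1$, so there is no ``good'' index $k_0$ available and your vague ``scalar normalisation'' gives no mechanism for producing convergent reduced representations there. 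The paper resolves this in two stages: first it runs the argument of Theorem~\ref{T: mainthm2n+1} to obtain meromorphic convergence only on $D\setminus I(g)$; then it uses Result~\ref{R: extndivisor} to push the convergence of $\{\nu(f_j,H)\}$ across the codimension-$\geq 2$ set $I(g)$, and applies Lemma~\ref{L: Fujimotomeronormalprop} once more to get meromorphic convergence on all of $D$. This two-stage extension is the genuinely new content of Theorem~\ref{T: finitefamily} relative to Theorem~\ref{T: mainthm2n+1}, and your sketch omits it.
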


We ought to stress that Theorem~\ref{T: finitefamily} is not merely a variant of 
Theorem~\ref{T: mainthm2n+1}. It also forms a step towards establishing a version
of the Montel--Carath{\'e}odory theorem for families of meromorphic mappings from
$D\subset \mathbb{C}^m$ into $\mathbb{P}^n$. But first, we state the following result,
which is a meromorphic analogue of
\cite[Theorem~2.8]{Yang Fang Pang 2014} by Yang--Fang--Pang:
  
\begin{corollary}\label{C: Share 2n+1}
  Let $D$ be a domain in $\mathbb{C}^m$.
  Let $\mathcal{F}$ be a family of meromorphic mappings from $D$ into $\mathbb{P}^n$, and
  let $H_1,\dots, H_{2n+1}$ be $2n+1$ hyperplanes in general position in $\mathbb{P}^n$. 
  Suppose that for each pair of mappings $f, g\in\mathcal{F}$ and each $k\in\{1,\dots, 2n+1\}$,
  $\mathsf{supp}\,\nu(f, H_k)=\mathsf{supp}\,\nu(g, H_k)$.
  Then the family $\mathcal{F}$ is meromorphically normal.
\end{corollary}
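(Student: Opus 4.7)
The plan is to deduce this corollary as a direct application of Theorem~\ref{T: finitefamily}, using a single element of $\mathcal{F}$ itself as the entire auxiliary family $\mathcal{G}$. First I would observe that the hypothesis implicitly requires $f(D)\not\subset H_k$ for every $f\in\mathcal{F}$ and every $k\in\{1,\dots,2n+1\}$, since otherwise the divisor $\nu(f,H_k)$ (which appears for every $f$ in the statement) would not be defined. The conclusion is vacuous when $\mathcal{F}=\emptyset$, so I may fix some $g_0\in\mathcal{F}$ to serve as an anchor.

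Next I would set
\[
   \mathcal{G} := \{g_0\}, \qquad \mathcal{H} := \{H_1,\dots,H_{2n+1}\},
\]
and, for each $f\in\mathcal{F}$, declare $g_f := g_0$ and $H_{k,\,f} := H_k$ for $k=1,\dots,2n+1$. The collection $\mathcal{H}$ is finite, hence discrete, and $\mathcal{G}$ is plainly finite, so the structural hypotheses of Theorem~\ref{T: finitefamily} are in place. It remains only to verify the three numbered conditions of that theorem together with its final global requirement.

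Condition~$(a)$ of Theorem~\ref{T: finitefamily} is the observation noted above; condition~$(b)$ is the general-position hypothesis of the corollary; condition~$(c)$ follows at once from the stipulated equality
\[
   \mathsf{supp}\,\nu(f,H_k) \;=\; \mathsf{supp}\,\nu(g_0,H_k),
\]
which in particular gives the one-sided containment $\mathsf{supp}\,\nu(f,H_{k,\,f})\subseteq\mathsf{supp}\,\nu(g_f,H_{k,\,f})$ demanded by the theorem. The final global condition $g(D)\not\subset H$ for all $g\in\mathcal{G}$ and $H\in\mathcal{H}$ collapses to $g_0(D)\not\subset H_k$, which has already been secured. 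An appeal to Theorem~\ref{T: finitefamily} then yields the meromorphic normality of $\mathcal{F}$.

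I do not anticipate any serious obstacle: the only point worth flagging is the small conceptual move of choosing the auxiliary family $\mathcal{G}$ \emph{inside} $\mathcal{F}$, which turns the symmetric ``shared-support'' hypothesis of the corollary into the asymmetric containment required by Theorem~\ref{T: finitefamily}. Everything else is bookkeeping.
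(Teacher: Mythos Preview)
Your proposal is correct and follows exactly the paper's own approach: fix one element of $\mathcal{F}$ to form the singleton family $\mathcal{G}$, take $\mathcal{H}=\{H_1,\dots,H_{2n+1}\}$, and apply Theorem~\ref{T: finitefamily}. Your write-up is simply a more detailed verification of the same argument, including the helpful explicit remark that the hypothesis forces $f(D)\not\subset H_k$ so that all the divisors in question are well defined.
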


The next result is the meromorphic analogue of the Montel--Carath{\'e}odory theorem that
we had referred to at the beginning of this section. It is a corollary of the last two results.

\begin{corollary}[\textsc{Meromorphic Montel--Carath{\'e}odory Theorem}]\label{C: Mon-Cara}
  Let $D$ be a domain in $\mathbb{C}^m$.
  Let $\mathcal{F}$ be a family of meromorphic mappings from $D$ into $\mathbb{P}^n$. 
  Suppose that $H_1,\dots, H_{2n+1}$ are $2n+1$ hyperplanes in general position in $\mathbb{P}^n$. 
  If for each $k\in\{1,\dots, 2n+1\}$ and each $f\in\mathcal{F}$, 
  $\nu(f, H_k)\equiv 0$, then $\mathcal{F}$ is meromorphically normal.
\end{corollary}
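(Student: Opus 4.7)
The plan is to deduce this corollary essentially for free from either Corollary~\ref{C: Share 2n+1} or Theorem~\ref{T: finitefamily}, by observing that the hypothesis $\nu(f, H_k)\equiv 0$ collapses every relevant divisor-support into the empty set, trivializing the remaining hypotheses of those results.

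First, I would note that the statement ``$\nu(f, H_k)\equiv 0$'' already presupposes $f(D)\not\subset H_k$, since the divisor $\nu(f, H_k)$ is defined as a non-negative divisor on $D$ only under this condition. Therefore, for every $f\in\mathcal{F}$ and every $k\in\{1,\dots, 2n+1\}$ we have $\mathsf{supp}\,\nu(f, H_k)=\emptyset$. In particular, for any pair $f, g\in\mathcal{F}$,
\[
 \mathsf{supp}\,\nu(f, H_k)=\emptyset=\mathsf{supp}\,\nu(g, H_k),\qquad k=1,\dots,2n+1,
\]
so that the sharing hypothesis of Corollary~\ref{C: Share 2n+1} is verified trivially. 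Since $H_1,\dots, H_{2n+1}$ are in general position by assumption, Corollary~\ref{C: Share 2n+1} applies and delivers the meromorphic normality of $\mathcal{F}$.

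If instead one wishes to invoke Theorem~\ref{T: finitefamily}, the recipe is equally direct: pick any point $p\in\mathbb{P}^n\setminus\bigcup_{k=1}^{2n+1}H_k$ (available because the complement of a finite union of proper projective subvarieties is non-empty) and let $g_0\colon D\to\mathbb{P}^n$ be the constant map with value $p$. Taking $\mathcal{G}=\{g_0\}$ and $\mathcal{H}=\{H_1,\dots, H_{2n+1}\}$, the collection $\mathcal{H}$ is finite and hence discrete, conditions $(a)$ and $(b)$ of Theorem~\ref{T: finitefamily} hold by the above remark and the general position hypothesis, condition $(c)$ is the trivial containment $\emptyset\subseteq\mathsf{supp}\,\nu(g_0, H_k)$, and $g_0(D)=\{p\}\not\subset H_k$ by the choice of $p$. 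Thus Theorem~\ref{T: finitefamily} yields the conclusion.

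There is no genuine obstacle in this proof; all the substantive work has been done upstream, in Theorem~\ref{T: finitefamily} and Corollary~\ref{C: Share 2n+1}. The only mild point worth flagging in writing is the implicit condition $f(D)\not\subset H_k$ lurking in the phrase ``$\nu(f, H_k)\equiv 0$'', which one should make explicit so that the hypotheses of the cited results are seen to be satisfied.
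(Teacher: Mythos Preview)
Your proposal is correct and matches the paper's own proof essentially verbatim: the paper simply observes that $\mathsf{supp}\,\nu(f,H_k)=\mathsf{supp}\,\nu(g,H_k)=\emptyset$ for all $f,g\in\mathcal{F}$ and invokes Corollary~\ref{C: Share 2n+1}. Your alternative route via Theorem~\ref{T: finitefamily} with a constant map $g_0$ is also valid but is not the path the paper chooses.
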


Our last theorem shows that one has a version of Result~\ref{R: Yang Fang Pang 2014}
wherein one infers that a family of meromorphic mappings $\mathcal{F}$ satisfying conditions
analogous to those in Result~\ref{R: Yang Fang Pang 2014} is meromorphically normal. Moreover,
one can state such a result wherein the domain of all the maps in $\mathcal{F}$ is a domain in
$\mathbb{C}^m$ for any $m\geq 1$. Our theorem is as follows:

\begin{theorem}\label{T: mainthm3n+1}
  Let $D$ be a domain in $\mathbb{C}^m$.
 Let $\mathcal{F}$ be a family of meromorphic mappings from $D$ into $\mathbb{P}^n$,
  $\mathcal{G}$ a family of holomorphic mappings from $D$ into $\mathbb{P}^n$,   
  and $\mathcal{H}$ a collection of hyperplanes in $\mathbb{P}^n$. 
  Suppose that, for each $f\in \mathcal{F}$, there
  exist a $g_f\in\mathcal{G}$ and a subset $\{H_{1,\,f},\ldots,\,H_{3n+1,\,f}\}\subset \mathcal{H}$
  such that: 
  \begin{enumerate}
    \item[$(a)$] $f(D)\not\subset H_{k,\,f}$ and $g_f(D)\not\subset H_{k,\,f}$ for
    $k=\,1,\ldots,\,3n+1$;
  
    \item[$(b)$] $\inf\{D(H_{1,\,f},\dots, H_{3n+1,\,f}): f\in\mathcal{F}\}>0$; and
  
    \item[$(c)$] $\mathsf{supp}\,\nu(f, H_{k,\,f})\subseteq g_f^{-1}(H_{k,\,f})$
     for $k=\,1,\ldots,\,3n+1$.
  \end{enumerate}
  If the family  $\mathcal{G}$ is normal, then $\mathcal{F}$ is meromorphically normal.
\end{theorem}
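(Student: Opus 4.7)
The plan is to prove the theorem by mimicking the strategy for Theorem~\ref{T: mainthm2n+1}: to reduce the problem to Fujimoto's Result~\ref{R: Fujithm1} applied to a judiciously chosen sub-collection of $2n+1$ hyperplanes out of the $3n+1$ available. The role played in Theorem~\ref{T: mainthm2n+1} by the closure condition on $(\mathcal{G},\mathcal{H})$ will here be taken over by the inflated supply of hyperplanes together with the new requirement that $g_f(D)\not\subset H_{k,f}$.

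More concretely, I would begin with an arbitrary sequence $(f_j)\subset\mathcal{F}$ and the associated data $g_j:=g_{f_j}\in\mathcal{G}$ and $H_{k,j}:=H_{k,f_j}$, $k=1,\ldots,3n+1$. Normality of $\mathcal{G}$ lets me pass to a subsequence along which $g_j\to g$ compactly on $D$ for some holomorphic $g:D\to\mathbb{P}^n$. Since the set of hyperplanes in $\mathbb{P}^n$ (parametrized by unit coefficient vectors modulo sign) is compact, a further subsequence gives $H_{k,j}\to H_k$ for each $k$; condition $(b)$ combined with continuity of $D(\cdot,\ldots,\cdot)$ ensures that $H_1,\ldots,H_{3n+1}$ themselves are in general position.

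The key combinatorial observation is that at least $2n+1$ of these limit hyperplanes do not contain $g(D)$. Let $I:=\{k : g(D)\not\subset H_k\}$; if the complement of $I$ in $\{1,\ldots,3n+1\}$ had size at least $n+1$, then $g(D)$ would lie in the intersection of $n+1$ hyperplanes in general position, which is empty in $\mathbb{P}^n$---a contradiction. Hence $|I|\geq 2n+1$, and I fix $I_0\subset I$ with $|I_0|=2n+1$. For each $k\in I_0$, a unit linear form $L_k$ defining $H_k$ yields a pull-back $g^{*}L_k\not\equiv 0$, and compact convergence of $g_j$ and $H_{k,j}$ yields compact convergence $g_j^{*}L_{k,j}\to g^{*}L_k$ on $D$. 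A Hurwitz-type argument (equivalently, weak convergence of the Lelong currents $dd^{c}\log|g_j^{*}L_{k,j}|$) then produces a uniform bound on the divisor volumes of $g_j^{-1}(H_{k,j})$ on any closed ball $\overline{B}\subset D$.

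The delicate remaining step---and the main obstacle I foresee---is promoting condition $(c)$, which merely yields the support containment $\mathsf{supp}\,\nu(f_j,H_{k,j})\subseteq g_j^{-1}(H_{k,j})$, into a bound on the divisor $\nu(f_j,H_{k,j})$ itself (i.e., with multiplicities), so that the hypothesis of Result~\ref{R: Fujithm1} is genuinely met. I would handle this by a local analysis on coordinate patches: picking reduced representations $\tilde{f}_j,\tilde{g}_j$ of $f_j,g_j$, comparing the vanishing orders of $\langle\alpha_{k,j},\tilde{f}_j\rangle$ and $\langle\alpha_{k,j},\tilde{g}_j\rangle$ along each irreducible component of $g_j^{-1}(H_{k,j})$, and invoking a Nevanlinna--Cartan truncated-counting argument applied to the $2n+1$ hyperplanes of $I_0$ in general position in order to bound these multiplicities uniformly in $j$. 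Once this divisor-volume bound is in hand for every $k\in I_0$ and every $\overline{B}\subset D$, Result~\ref{R: Fujithm1} applied to $\{H_{k,j}:k\in I_0\}$ delivers an $m$-convergent subsequence of $(f_j)$, and meromorphic normality of $\mathcal{F}$ follows.
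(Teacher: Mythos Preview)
Your setup and the combinatorial observation that at least $2n+1$ of the limiting hyperplanes satisfy $g(D)\not\subset H_k$ are exactly right, and this is precisely what the paper does as well. The divergence comes immediately afterward, and there is a genuine gap in your plan.

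You propose to reduce to Result~\ref{R: Fujithm1}, which requires uniform bounds on the \emph{divisor} volumes of $\nu(f_j,H_{k,j})$ on closed balls, i.e., bounds that count multiplicities. But condition~$(c)$ gives only the set-theoretic containment $\mathsf{supp}\,\nu(f_j,H_{k,j})\subseteq g_j^{-1}(H_{k,j})$; it says nothing whatsoever about the orders of vanishing of $\langle\alpha_{k,j},\tilde f_j\rangle$ along the components of that support. Nothing in the hypotheses prevents those orders from growing with $j$. Your proposed fix---a ``Nevanlinna--Cartan truncated-counting argument''---does not supply what is needed: the Second Main Theorem compares counting functions and truncated counting functions \emph{asymptotically} for a single map, and gives no uniform local bound on multiplicities across a family of meromorphic maps on a bounded domain. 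So the reduction to Result~\ref{R: Fujithm1} cannot be completed along these lines.

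The paper avoids this issue entirely by not invoking Result~\ref{R: Fujithm1}. After isolating the index set $L$ with $|L|\geq 2n+1$ and forming $E:=\bigcup_{k\in L}\mathsf{supp}\,\nu(g,H_k)$, one observes that on any relatively compact $U\subset D\setminus E$ the maps $f_j$ are holomorphic and (for large $j$) satisfy $\mathsf{supp}\,\nu(f_j,H_{k,j})\cap U=\emptyset$ for every $k\in L$. Now the Aladro--Krantz rescaling lemma (Lemma~\ref{L: AK}) combined with Hurwitz and Eremenko's Picard-type theorem (Lemma~\ref{L: Eremenko-hyperbolic}) forces normality of $\{f_j|_U\}$---exactly the argument in the third and fourth paragraphs of the proof of Theorem~\ref{T: mainthm2n+1}. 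This uses only the \emph{supports}, never the multiplicities. One then upgrades compact convergence on $D\setminus E$ to meromorphic convergence on $D$ via Lemma~\ref{L: Fujimotomeronormalprop}, again as in Theorem~\ref{T: mainthm2n+1}. Once you have the set $L$, the remainder is literally a repetition of that earlier proof; no divisor-volume estimate is needed.
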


It turns out\,---\,for reasons analogous to those that apply to Result~\ref{R: Yang Fang Pang 2014}\,---\,that
the number $3n+1$ in the theorem above is sharp: see Example~\ref{Ex: essential-sharp}.

The proof of Theorem~\ref{T: mainthm2n+1} will be presented in Section~\ref{S:mainthm2n+1}.
As the discussion right after the statement of this theorem suggests, there are a few basic
notion that need to be elaborated upon. This will be the focus of the next section. The
proofs of Theorem~\ref{T: finitefamily} and its corollaries will be presented in Section~\ref{S:finitefamily},
while the proof of Theorem~\ref{T: mainthm3n+1} will be presented in Section~\ref{S:mainthm3n+1}.
\smallskip

We end this section with a brief explanation of some common notations.

\subsection{Some notations}\label{SS:notations}
We fix the following notation, which we shall use without any further clarification. 
\begin{enumerate}
	\item As in the discussion above, $\|\boldsymbol{\cdot}\|$ will denote the Euclidean norm.
	Expressions like ``unit vector'' will be with reference to this norm.
	
	\item Let $f$ be a meromorphic mapping of a domain $D\subset \mathbb{C}^m$
	into $\mathbb{P}^n$ and let $I(f)$ denote the indeterminacy locus of $f$
	(which would be the empty set if $m = 1$). We write
	\[
	  \Gamma_f := \overline{\{(z, f(z)) : z\in D\setminus I(f)\}}
	\]
	(which would be precisely the graph of $f$ if it were holomorphic).
	Let $H$ be a hyperplane in $\mathbb{P}^n$.
	The notation $f(D)\not\subset H$ is our shorthand for the condition
	\[
	  \Gamma_f \text{ is not a subset of } D\times H.
       \]
\end{enumerate}

\section{Basic notions}\label{S:notions}

This section is devoted to elaborating upon concepts and terminology that made an
appearance in Section~\ref{S:intro}, and to introducing certain basic notions that we shall
need in our proofs.
\smallskip

In this section $D$ will always denote a domain in $\mathbb{C}^m$.
\smallskip

Let $f\not\equiv 0$ be a holomorphic function on $D$. 
For a point $a\in D$, let $f(z)= \sum_{j=0}^{\infty}P_j(z-a)$ denote the power-series representation of
$f$ in a neighborhood of $a$, where $P_j$ is either identically zero or a homogeneous polynomial of degree $j$.
The number 
\begin{equation*}
  \nu_{f}(a):= \min\{j\,|\,P_j\not\equiv 0\}
\end{equation*} 
is said to be the {\it zero-multiplicity of $f$ at $a$}. An integer-valued function $\nu: D\rightarrow \mathbb{Z}$ 
is called a {\it divisor} on $D$ if for each point $a\in D$ there exist holomorphic functions 
$g\not\equiv 0$ and $h\not\equiv 0$ in a neighborhood $U$ of $a$ such that 
$\nu(z)= \nu_{g}(z)-\nu_{h}(z)$ for all  $z\in U$. A divisor $\nu$ on $D$ is said to be {\it non-negative} 
if $\nu(z)\geq 0$ for all $z\in D$. We define the support 
$\mathsf{supp}\,\nu$ of a non-negative divisor $\nu$ on $D$ by
\begin{equation*}
  \mathsf{supp}\,\nu:= \{z\in D\,|\,\nu(z)\neq 0\}.
\end{equation*} 
Let $\{\nu_j\,|\,j\in\mathbb{N}\}$ be a sequence of non-negative divisors on a domain 
$D\subset\mathbb{C}^m$. We say that the sequence $\{\nu_j\}$ converges to a non-negative divisor
$\nu$ on $D$ if each $a\in D$ has a neighborhood $U$ such that there exist holomorphic functions 
$h_j(\not\equiv 0)$ and $h(\not\equiv 0)$ in $U$  with $\nu_j(z)=\nu_{h_{j}}(z)$ and $\nu(z)=\nu_h(z)$ 
for all $z\in U$, and $\{h_j\}$ converges compactly to $h$ on $U$.
\smallskip
    
The following result, due to Stoll, confirms that the support behaves continuously as a function on the space 
of non-negative divisors into the space of closed sets.

\begin{result}[paraphrasing {\cite[Theorem 4.10]{Stoll}}]\label{R:divisor-supp}
  Let $\{\nu_j\}$ be a sequence of non-negative divisors on the domain $D$.
  If $\{\nu_j\}$ converges to a non-negative divisor $\nu$, then the 
  sequence $\{\mathsf{supp}\,\nu_j\}$ converges to $\mathsf{supp}\,\nu$.
\end{result}

\noindent{We must clarify here that, given a sequence of closed sets $\{S_j\}$ of $D$, we say that
$S_j$ converges to $S$ if
\[
  \limsup_{j\to \infty}S_j := \bigcap_{k = 1}^\infty\,\bigcup_{j\geq k}S_j\,=\,\bigcup_{k = 1}^\infty\,\bigcap_{j\geq k}S_j
  =:  \liminf_{j\to \infty}S_j.
\]}

Let $S$ be an analytic set of codimension at least $2$ in $D$. By the Thullen--Remmert--Stein theorem, any 
non-negative divisor $\nu$ on $D\setminus S$ can uniquely be extended to a non-negative divisor 
$\hat{\nu}$ on $D$. Moreover, we have the following result given by Fujimoto.

\begin{result}[{\cite[page 26, (2.9)]{Fujimoto74}}]\label{R: extndivisor}
  With $D$ and $S\varsubsetneq D$ as above, if a sequence of non-negative divisors
  $\{\nu_j\,|\,j\in\mathbb{N}\}$ on $D\setminus S$ converges to 
  a divisor $\nu$ on $D\setminus S$, then $\{\hat{\nu}_j\}$ converges to $\hat{\nu}$ on $D$, where 
  $\hat{\nu}_j$ and $\hat{\nu}$ are the extensions of $\nu_j$ and $\nu$ respectively.
\end{result}

Let us now consider a meromorphic mapping $f: D\rightarrow \mathbb{P}^n$.
Fixing a system of homogeneous coordinates on $\mathbb{P}^n$, for
each $a\in D$, we have a holomorphic map 
$\widetilde{f}(z):= (f_0(z), f_1(z),\dots, f_n(z))$ on some neighborhood $U$ of $a$
such that, with $I(f)$ denoting the indeterminacy set of $f$:
\begin{itemize}
  \item $f(z) = [f_0(z): f_1(z):\cdots: f_n(z)]$ for each $z\in U\setminus I(f)$; and
  \item $I(f)\cap U = \{z\in U\,|\,f_0(z)= f_1(z)=\cdots= f_n(z)=0\}$.
\end{itemize}
We shall call any such holomorphic map $\widetilde{f} : U\rightarrow \mathbb{C}^{n+1}$
an {\it admissible representation} (or reduced representation) of $f$ on $U$. Note that
the set $\{z\in U\,|\,f_0(z)= f_1(z)=\cdots= f_n(z)=0\}$ is  
of codimension at least $2$.
\smallskip
 
Let $H$ be a hyperplane as defined in \eqref{Eq: hyperplane}, whence it is a divisor
in $\mathbb{P}^n$. Let $f: D\rightarrow \mathbb{P}^n$ be a meromorphic mapping such that
$f(D)\not\subset H$. Under this condition\,---\,see subsection~\ref{SS:notations} for
what this means\,---\,it is possible to define the
pullback of $H$ under $f$ as a {\bf divisor} in $D$, which we shall denote by $\nu(f, H)$. To
briefly see why this is so, consider 
any $a\in D$, take an admissible representation 
$\widetilde{f}:= (f_0, f_1,\dots, f_n)$ of $f$ on a neighborhood $U$ of $a$, 
and consider the holomorphic function $f^*H:= a_0f_0 + a_1f_1 +\dots+ a_nf_n$.
It follows from the definition of an admissible representation that, in a neighborhood of $a$, the values of
the divisor $\nu_{f^*H}$ do not depend on the choice of admissible representations. It is now easy
to check that if one defines $\nu(f, H)$ by
\[ 
  \left.\nu(f, H)\right|_U(z):= \nu_{f^*H}(z), \; \; z\in U,
\]
then $\nu(f, H)$ is well defined globally to give a divisor on $D$.
\smallskip

Let $\{H_j\,|\,j\in\mathbb{N}\}$ be a sequence of hyperplanes in $\mathbb{P}^n$. Then, a
{\it limiting hyperplane} of $\{H_j\}$ is the limit of any convergent
subsequence\,---\,viewing $H_j$, $j = 1, 2, 3,\dots$, as points in the dual projective
space\,---\,of $\{H_j\,|\,j\in\mathbb{N}\}$. It is useful, in view of our proofs
below, to describe a limiting hyperplane quantitatively as well. To this end: note that  
each $H_j$ has the representation 
\begin{equation*}
  H_j :=
  \left\{[w_0: w_1:\cdots: w_n]\in\mathbb{P}^n\,\big|
  \sum\nolimits_{l=0}^{n}a_{j,\,l}w_l=0\right\},
\end{equation*} 
where $(a_{j,\,0},\dots, a_{j,\,n})=:\alpha_{j}\in\mathbb{C}^{n+1}$ such that  $\|\alpha_{j}\|=1$. 
Since the sphere $\mathcal{S}:= \{z\in\mathbb{C}^{n+1}\,:\, \|z\|=1\}$ is compact, there
exist a subsequence $\{\alpha_{j_\mu}\}$ of 
$\{\alpha_{j}\}$ and $\alpha:= (a_{0},\dots, a_{n})\in \mathcal{S}$ such that
$\alpha_{j_\mu}\rightarrow \alpha$ as $\mu\rightarrow\infty$. The hyperplane
\begin{equation*}
  H :=
  \left\{[w_0 : w_1 :\cdots: w_n]\in\mathbb{P}^n\,\big| 
  \sum\nolimits_{l=0}^{n} a_{l}w_l=0\right\}.
\end{equation*}
is the limit of the sequence $\{H_{j_{\mu}}\,|\,\mu\in\mathbb{N}\}$. Conversely, any limiting
hyperplane of $\{H_j\}$ arises in this manner. (Related to the last statement is the following
observation: if $\{H_j\}$ is a convergent sequence in the dual projective space, then, since
the vectors $\alpha_j$ associated to $H_j$, $j = 1, 2, 3,\dots$, as described above are not uniquely
determined, the auxiliary sequence $\{\alpha_j\}$ need not be convergent. However, if
$\{H_j\}$ is convergent, then each subsequential limit of $\{\alpha_j\}$ would determine the same
hyperplane.)
Let $\mathcal{H}$ be a collection of hyperplanes in $\mathbb{P}^n$. 
Given the structure of the space of all hyperplanes in $\mathbb{P}^n$,
the closure of $\mathcal{H}$\,---\,which appears in the statement of
Theorem~\ref{T: mainthm2n+1}\,---\,is just the union of $\mathcal{H}$ and the
set of all limiting hyperplanes of $\mathcal{H}$.
\smallskip

Let $M$ be a compact connected Hermitian manifold.
The space $\operatorname{Hol}(D, M)$ of holomorphic 
mappings from $D$ into $M$ is endowed with the compact-open topology.
\begin{definition}\label{Def:normal}
  A family $\mathcal{F}\subset\operatorname{Hol}(D, M)$ is said to be {\it normal} if 
  $\mathcal{F}$ is relatively compact in $\operatorname{Hol}(D, M)$.
\end{definition}
 
\begin{definition}[{\cite[Definition 3.1]{Fujimoto74}}]\label{Def:mcgt}
  Let $\{f_j\,|\,j\in\mathbb{N}\}$ be a sequence of meromorphic mappings from $D$ into
  $\mathbb{P}^n$. The sequence $\{f_j\}$ is said to {\it converge meromorphically}  
  on $D$ to a meromorphic mapping $f$ if, for
  each $a\in D$, there exists an open neighborhood $U$ of $a$ and an admissible representation
  \begin{equation*}
    \widetilde{f}_j(z) :=
    \big(f_{j,\,0}(z), f_{j,\,1}(z),\dots, f_{j,\,n}(z)\big)
  \end{equation*}
  of $f_j$ on $U$, $j = 1, 2, 3,\dots$, such that, for each $l\in\{0, 1,\dots, n\}$, 
  the sequence $\{f_{j,\,l}\}$ converges uniformly on compact subsets of $U$ to a 
  holomorphic function $f_l$ on $U$ with the property that
  \begin{equation*}
    f(z) := [f_0(z) : f_1(z) :\cdots : f_n(z)] \; \;
    \forall z\in U\setminus\bigcap\nolimits_{l = 0}^nf_l^{-1}\{0\},
  \end{equation*}
  where $f_{l_0}(z)\not\equiv 0$ on $U$ for some $l_0$.
\end{definition}

We now have all the terminology needed for the definition that is central to the discussion
in Section~\ref{S:intro}.

\begin{definition}[{\cite[Definition 4.1]{Fujimoto74}}]\label{Def:mnormal}
  A family $\mathcal{F}$ of meromorphic mappings from $D$ into $\mathbb{P}^n$ is said to be a 
  {\it meromorphically normal family} if any sequence in $\mathcal{F}$ has a meromorphically 
  convergent subsequence on $D$.
\end{definition}

\section{Some Examples}\label{S: Examples}

We now provide the examples alluded to in Section~\ref{S:intro}.

\begin{example}\label{Ex: cond(c)}
  The condition $(c)$ in the statement of Theorem~\ref{T: mainthm2n+1} is essential.
\end{example}
  
\noindent{Let $D$ be the open unit disc in $\mathbb{C}$, and $\mathcal{F}_1 = \{f_j(z)\,|\,j\in\mathbb{N}\}$,
where $f_j: D\rightarrow \mathbb{P}^1$ is defined by
\begin{equation*}
  f_j(z):= [\cos jz : \sin jz].
\end{equation*}
Let $\mathcal{G}_1$ be the singleton consisting of the map
$D\ni z\longmapsto[1 : z/2 ]$. Call this map $g$.
Let 
\begin{align*}
  H_1 & :=  \{[w_0: w_1]\,|\, w_0 - iw_1 = 0\}, \\
  H_2 & :=  \{[w_0: w_1]\,|\, w_0 + iw_1 = 0\}, \text{ and} \\
  H_3 & :=  \{[w_0: w_1]\,|\, w_1=0\}.
\end{align*}
These hyperplanes are in general position. The condition~$(c)$ in the statement of
Theorem~\ref{T: mainthm2n+1} fails  because
$g^{-1}(H_3) = \{0\}$ whereas $f_j^{-1}(H_3)$ has a large number of points
for $j$ large. Thus, all the conditions in the statement of
Theorem~\ref{T: mainthm2n+1} hold true except for $(c)$ (here
$\mathsf{supp}\,\nu(f_j, H_3) = f_j^{-1}(H_3)$).  
However, the family $\mathcal{F}_1$ fails to be meromorphically normal.
\hfill $\blacktriangleleft$}  
\smallskip

\begin{example}
  The number $2n+1$ is sharp in the statement of Theorem~\ref{T: mainthm2n+1}.
\end{example}

\noindent{Let $D$ be the open unit disc in $\mathbb{C}$, and $\mathcal{F}_2= \{f_j(z)\,|\,j\in\mathbb{N}\}$,
where $f_j : D\rightarrow \mathbb{P}^1$ is defined by
\begin{equation*}
  f_j(z):= [1 : e^{jz}].
\end{equation*}
Let $\mathcal{G}_2$ be the singleton consisting of the map ${\sf id}_D$.
Let $H_k:= \{[w_0: w_1]\,|\,w_k = 0\}$, $k=0, 1$, be two hyperplanes in $\mathbb{P}^1$.
These hyperplanes are in general position. Clearly, $\mathcal{F}_2,\, \mathcal{G}_2$,
and $\{H_1, H_2\}$ satisfy all other conditions\,---\,with the understanding that $k$ is limited
to $k = 1, 2$\,$(=2n)$\,---\,in the statement of Theorem~\ref{T: mainthm2n+1}. However, 
$\mathcal{F}_2$ is not meromorphically normal. \hfill $\blacktriangleleft$} 
\smallskip
  
The following example illustrates that we cannot eliminate the condition imposed on the pair
$(\mathcal{G}, \mathcal{H})$. Observe: this example also confirms that the number $3n+1$ in the statement
of Theorem~\ref{T: mainthm3n+1} is also sharp. This example is taken from \cite{Yang Fang Pang 2014},
where it serves a different purpose (somewhat resembling our previous observation). We repurpose that
example as follows:
     
\begin{example}[paraphrasing {\cite[Example~1]{Yang Fang Pang 2014}}]\label{Ex: essential-sharp}
  The condition imposed on the pair $(\mathcal{G}, \mathcal{H})$ in the statement of
  Theorem~\ref{T: mainthm2n+1} is essential. Specifically: the conclusion of
  Theorem~\ref{T: mainthm2n+1} need not follow if the range of some limit
  point of $\mathcal{G}$ is contained in some hyperplane in the closure of $\mathcal{H}$.
  Additionally, the number $3n+1$ in the statement of Theorem~\ref{T: mainthm3n+1} is also sharp.
\end{example}

{\noindent Let $D$ be the open unit disc in $\mathbb{C}$, and 
$\mathcal{F}_3= \{f_j(z)\,|\,j\in\mathbb{N}\}$, where $f_j: D\rightarrow \mathbb{P}^2$ 
is defined by
\begin{equation*}
  f_j(z):= [i\cos jz: \sin jz: \sin jz].
\end{equation*} 
Let us denote by $\alpha_{j,\,1}, \alpha_{j,\,2},\dots, \alpha_{j,\,k_j}$ the zeros of $\sin jz$ in $D$. Let 
$\mathcal{G}_3= \{g_j\,|\,j\in\mathbb{N}\}$, where $g_j: D\rightarrow \mathbb{P}^2$ 
is defined by
\begin{equation*}
  g_j(z):= \left[1: \prod\nolimits_{1\leq t\leq k_j}\frac{z-\alpha_{j,\,t}}{1-\bar{\alpha}_{j,\,t}z}: 
  \prod\nolimits_{1\leq t\leq k_j}\frac{z-\alpha_{j,\,t}}{1-\bar{\alpha}_{j,\,t}z}\right].
\end{equation*}
Let 
\begin{align*}
  H_{1} & := \{[w_0: w_1: w_2]\,|\, 3w_0 + w_1 + 2w_2 = 0\}, \\
  H_{2} & := \{[w_0: w_1: w_2]\,|\, -5w_0 + w_1 + 4w_2 = 0\}, \\
  H_{3} & := \{[w_0: w_1: w_2]\,|\, 7w_0 + w_1 + 6w_2 = 0\}, \\
  H_{4} & := \{[w_0: w_1: w_2]\,|\, -9w_0 + w_1 + 8w_2 = 0\}, \\
  H_{5} & := \{[w_0: w_1: w_2]\,|\, w_2 = 0\}, \text{ and } \\
  H_{6} & := \{[w_0: w_1: w_2]\,|\, w_1 = 0\}.       
\end{align*}  
 It is easy to see that $D(H_{1},\dots, H_{6})>0$.
Also, we have $g_j(D)\not\subset H_{k}$ for all $j$ and $k=1,\dots, 6$. 
Note that $g_j(z)\rightarrow g(z)$, where $g$ is the map $D\ni z\mapsto [1: 0: 0]$, this 
clearly shows that $g(D)\subset H_5$. It is not hard to see that
$f_j^{-1}(H_{k})= g_j^{-1}(H_{k})$
for all $j$ and $k=1,\dots, 6$. Thus, all the conditions in the statement of: 
\begin{itemize}
  \item Theorem~\ref{T: mainthm2n+1} hold true except for the condition imposed on the pair 
  $(\mathcal{G}, \mathcal{H})$.
  
  \item Theorem~\ref{T: mainthm3n+1} hold true, with the understanding that $k$ is limited
  to $k = 1,\dots, 6$\,$(=3n)$.
\end{itemize}
However, $\mathcal{F}_3$ fails to be meromorphically normal. \hfill $\blacktriangleleft$} 
\medskip

\section{Essential lemmas}\label{S: lemmas}

In order to prove our theorems, we need to state certain known results.
\smallskip

One of the well-known tools in the theory of normal families in one complex variable is Zalcman's lemma.
Roughly speaking, it says that the failure of normality implies that a certain kind of infinitesimal 
convergence must take place. The higher dimensional analogue of Zalcman's rescaling lemma 
is as follows:

\begin{lemma}[{\cite[Theorem 3.1]{AladroKrantz}}]\label{L: AK}
  Let $M$ be a compact complex space, and $\mathcal{F}$ a family of holomorphic mappings 
  from a domain $D\subset \mathbb{C}^m$ into $M$. The family $\mathcal{F}$ is not normal 
  if and only if there exist
  \begin{enumerate}
    \item[$(a)$] a point $\xi_0\in D$ and a sequence $\{\xi_j\}\subset D$ such that  $\xi_j\rightarrow \xi_0$;
    
    \item[$(b)$] a sequence $\{f_j\}\subset \mathcal{F}$;
    
    \item[$(c)$] a sequence $\{r_j\}\subset \mathbb{R}$ with $r_j>0$ and $r_j\rightarrow 0$; and
    
    \item[$(d)$] a sequence $\{u_j\}$ of unit vectors in $\mathbb{C}^m$
  \end{enumerate} 
  such that $ h_j(\zeta):= f_j(\xi_{j}+r_{j}u_{j}\zeta)$, where $\zeta\in\mathbb{C}$ satisfies
  $\xi_{j}+r_{j}u_{j}\zeta\in D$, converges uniformly on compact subsets of $\mathbb{C}$ to a 
  non-constant holomorphic mapping $h: \mathbb{C}\rightarrow M.$
\end{lemma}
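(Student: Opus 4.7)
The approach rests on two classical ingredients: a Marty-type criterion that characterizes normality in terms of uniform bounds on the Hermitian-operator norm of the differential, together with the higher-dimensional version of Zalcman's rescaling trick. Fix the Euclidean metric on $D$ and the given Hermitian metric on $M$, and for $f \in \operatorname{Hol}(D, M)$ write $\|df(z)\|$ for the operator norm of $df(z)\colon T_z D \to T_{f(z)} M$. Since $M$ is compact, images of maps in $\mathcal{F}$ are automatically precompact, and a covering of $M$ by coordinate charts combined with Cauchy estimates and Arzel\`a--Ascoli yields the following equivalence: $\mathcal{F}$ is normal on $D$ if and only if $\{z \mapsto \|df(z)\| : f \in \mathcal{F}\}$ is locally uniformly bounded on $D$.

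The direction $(\Leftarrow)$ is immediate by contraposition. If such sequences exist with $h_j \to h$ locally uniformly on $\mathbb{C}$ and $h$ non-constant, then assuming $\mathcal{F}$ were normal one could extract a subsequence $f_j \to f$ locally uniformly on $D$; since $r_j \to 0$, one would have $h_j(\zeta) = f_j(\xi_j + r_j u_j \zeta) \to f(\xi_0)$ uniformly on compact subsets of $\mathbb{C}$, making the limit constant and contradicting the non-constancy of $h$.

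For the substantive direction $(\Rightarrow)$, assume $\mathcal{F}$ is not normal. By the criterion above, there exist $\xi_0 \in D$ and $\rho > 0$ with $\overline{B(\xi_0, \rho)} \subset D$, together with $f_j \in \mathcal{F}$ and $z_j \in B(\xi_0, \rho)$ satisfying $\|df_j(z_j)\| \to \infty$. I apply the higher-dimensional Zalcman maximum trick: let $\xi_j^* \in B(\xi_0, \rho)$ be a point at which the continuous function
$$z \longmapsto \bigl(\rho^2 - \|z - \xi_0\|^2\bigr)\,\|df_j(z)\|$$
attains its maximum on $\overline{B(\xi_0, \rho)}$; this point is necessarily interior since the function vanishes on $\partial B(\xi_0, \rho)$. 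Set $r_j := 1/\|df_j(\xi_j^*)\|$ and pick a unit vector $u_j \in \mathbb{C}^m$ realizing $\|df_j(\xi_j^*)(u_j)\| = \|df_j(\xi_j^*)\|$. Define $h_j(\zeta) := f_j(\xi_j^* + r_j u_j \zeta)$; then $r_j \to 0$, and for each $R > 0$ the disc $\{|\zeta| \leq R\}$ lies in the domain of $h_j$ once $j$ is sufficiently large. Passing to a subsequence, $\xi_j^* \to \xi_\infty \in \overline{B(\xi_0, \rho)}$, and $\xi_\infty$ plays the role of the point called $\xi_0$ in the statement.

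The heart of the argument is the uniform bound on $\|dh_j\|$: for $|\zeta| \leq R$, the maximality of $\xi_j^*$ gives
$$\|dh_j(\zeta)\| \;=\; r_j\,\bigl\|df_j(\xi_j^* + r_j u_j \zeta)(u_j)\bigr\| \;\leq\; \frac{\rho^2 - \|\xi_j^* - \xi_0\|^2}{\rho^2 - \|\xi_j^* + r_j u_j \zeta - \xi_0\|^2},$$
whose right-hand side tends to $1$ uniformly in $\zeta$ on $\{|\zeta| \leq R\}$ as $j \to \infty$. Hence $\{\|dh_j\|\}$ is locally uniformly bounded on $\mathbb{C}$, and the Marty-type criterion produces a subsequence of $\{h_j\}$ converging locally uniformly to a holomorphic $h \colon \mathbb{C} \to M$. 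The normalization $\|dh_j(0)\| = r_j \|df_j(\xi_j^*)(u_j)\| = 1$ passes to the limit to yield $\|dh(0)\| = 1$, so $h$ is non-constant. I expect the principal obstacle to be the careful formulation and proof of the Marty-type criterion for holomorphic maps into a general compact Hermitian manifold, since this is where the geometric content of the lemma actually resides; once the criterion is in hand the Zalcman rescaling and the derivative estimate above are essentially formal.
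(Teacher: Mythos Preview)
The paper does not supply its own proof of this lemma: it is quoted verbatim as \cite[Theorem~3.1]{AladroKrantz} and used as a black box, with only a remark that in the \emph{non-compact} target case the original argument had a gap later filled in \cite{Thai Trang Huong 03}. So there is nothing in the paper to compare your argument against.

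That said, your sketch is precisely the Aladro--Krantz strategy: a Marty-type equicontinuity criterion (normality $\Leftrightarrow$ local uniform boundedness of $\|df\|$) combined with the Zalcman rescaling via the auxiliary function $z\mapsto(\rho^2-\|z-\xi_0\|^2)\|df_j(z)\|$. The contrapositive direction and the derivative estimate are carried out correctly. A couple of small points are worth tightening. First, to ensure that the maximum values $(\rho^2-\|\xi_j^*-\xi_0\|^2)\|df_j(\xi_j^*)\|$ actually tend to infinity (and hence $r_j\to 0$ and $\rho_j/r_j\to\infty$), you need the initial points $z_j$ to lie in a fixed compact subset of $B(\xi_0,\rho)$; this is immediate once you first pass to a subsequence so that the $z_j$ converge. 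Second, the statement is for a compact complex \emph{space}, not a manifold, so the phrase ``the given Hermitian metric on $M$'' and the passage of $\|dh_j(0)\|=1$ to the limit require either an embedding of $M$ into some $\mathbb{P}^N$ (and use of the Fubini--Study metric) or a length function in the sense of complex spaces; this is exactly the issue you flag as the ``principal obstacle,'' and it is handled in the cited references rather than in the present paper.
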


\begin{remark}
  We remark that the result of Aladro--Krantz in \cite{AladroKrantz} has a weaker hypothesis
  than in Lemma~\ref{L: AK}. In the case where $M$ is {\it non-compact}, there is a case missing from
  their analysis. The arguments needed in this case were provided by
  \cite[Theorem 2.5]{Thai Trang Huong 03}. At any rate, Lemma~\ref{L: AK} is the version of the
  Aladro--Krantz theorem that we need.
\end{remark} 

The following result, due to Fujimoto, is about the extension of the domain of 
meromorphic convergence of a certain sequence of meromorphic mappings.
 
\begin{lemma}[{\cite[Proposition 3.5]{Fujimoto74}}]\label{L: Fujimotomeronormalprop}
  Let $D$ be a domain in $\mathbb{C}^m$, and $S$ a proper analytic subset of $D$. 
  Let $\{f_j\}$ be a sequence of meromorphic mappings from $D$ into $\mathbb{P}^n$. 
  Suppose that $\{f_j\}$ converges meromorphically to a meromorphic mapping $f$ on $D\setminus S$. 
  If there exists a hyperplane $H$ in $\mathbb{P}^n$ such that $f(D\setminus S)\not\subset H$ and 
  the sequence $\{\nu(f_j, H)\}$ of divisors converges on $D$, then $\{f_j\}$ converges meromorphically on $D$.
\end{lemma}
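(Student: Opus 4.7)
The plan is to upgrade the meromorphic convergence from $D\setminus S$ to $D$ by arguing locally at an arbitrary point $a\in S$. Fix a small coordinate ball $U\subset D$ about $a$. After a unitary change of homogeneous coordinates on $\mathbb{P}^n$ we may assume $H=\{w_0=0\}$. The hypothesis $\nu(f_j,H)\to \nu$ on $D$, together with the definition of divisor convergence, furnishes (after shrinking $U$) holomorphic functions $h_j,h$ on $U$ with $\nu_{h_j}=\nu(f_j,H)|_U$, $\nu_h=\nu|_U$, $h\not\equiv 0$, and $h_j\to h$ compactly on $U$.

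The next step is to replace the given admissible representations of $f_j$ on $U\setminus S$ by ones whose zeroth component is the globally defined $h_j$. Around any $p\in U\setminus S$ take a local admissible representation $(f_{j,0},\ldots,f_{j,n})\to (f_0,\ldots,f_n)$ with $f_0\not\equiv 0$, which exists by the meromorphic convergence hypothesis together with $f(D\setminus S)\not\subset H$. On this neighborhood $\nu_{f_{j,0}}=\nu_{h_j}$, so $f_{j,0}/h_j$ is a nowhere vanishing holomorphic unit. Rescaling the admissible representation by its reciprocal yields $(h_j,F_{j,1}^{(p)},\ldots,F_{j,n}^{(p)})$ with $F_{j,l}^{(p)}:=h_j\,f_{j,l}/f_{j,0}$, and any other choice of local admissible representation differs by a scalar unit and so produces the same $F_{j,l}^{(p)}$. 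The local pieces therefore patch, uniquely, to holomorphic functions $F_{j,l}$ on $U\setminus S$, and $(h_j,F_{j,1},\ldots,F_{j,n})$ is an admissible representation of $f_j$ there. Moreover, on $U\setminus(S\cup\{f_0=0\})$ the sequence $\{F_{j,l}\}$ converges compactly to $F_l:=h f_l/f_0$.

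The crucial step is to extend the $F_{j,l}$ across $S$ and extract a compactly convergent subsequence on all of $U$. The strategy is a rescaling-by-maximum-modulus argument in the spirit of Fujimoto: if $|F_{j,l}|$ blew up along a sequence $p_k\to p^*\in S$, normalizing the components of $(h_j,F_{j,1},\ldots,F_{j,n})$ by their sup over a small polydisc would produce a non-trivial limiting tuple whose zeroth component vanishes (since $h_j\to h$ stays bounded while some other component is of larger order), and the corresponding holomorphic limit map would have image in $H$ on an open set. By the identity principle this would force $f(D\setminus S)\subset H$, contradicting the hypothesis. Once the $F_{j,l}$ are locally bounded near $S$, Riemann's removable singularity theorem (for the codimension-one part of $S$ one applies it component by component using that each $F_{j,l}$ is holomorphic and bounded across the hypersurface, while codimension-$\geq 2$ extension is automatic by Hartogs) extends each $F_{j,l}$ to a holomorphic function on $U$. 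The full tuple $(h_j,F_{j,1},\ldots,F_{j,n})$ is then locally uniformly bounded on $U$, so Montel yields a subsequence converging compactly to $(h,F_1,\ldots,F_n)$; since $h\not\equiv 0$, this is an admissible representation of a meromorphic limit on $U$, which by uniqueness must agree with $f$ on $U\setminus S$. Covering $D$ by such $U$'s and using a standard diagonal argument, together with the uniqueness of the meromorphic limit on $D\setminus S$, yields meromorphic convergence of $\{f_j\}$ on $D$.

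The main obstacle I expect is the boundedness/extension step across $S$: the intuition that the divisor convergence $h_j\to h$ controls the behavior near $H$ is clear, but making the rescaling rigorous requires careful bookkeeping when $S$ has codimension-one components, where Riemann extension must be justified rather than appealed to automatically, and where one must use the specific admissible form of the representation (zero set of codimension $\geq 2$) together with $f(D\setminus S)\not\subset H$ to exclude a degenerate limit in $H$.
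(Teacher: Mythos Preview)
The paper does not contain a proof of this lemma: it is stated in Section~4 as a result quoted from \cite[Proposition~3.5]{Fujimoto74} and used as a black box in the proofs of Theorems~\ref{T: mainthm2n+1} and~\ref{T: finitefamily}. There is therefore no proof in the present paper to compare your proposal against.

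For what it is worth, your outline is broadly in the spirit of Fujimoto's original argument: normalize so that $H=\{w_0=0\}$, use the convergence of $\nu(f_j,H)$ to produce a common zeroth component $h_j\to h\not\equiv 0$ on a small neighborhood, and then control the remaining components across $S$. The step you correctly flag as delicate\,---\,ruling out blow-up of $|F_{j,l}|$ near $S$ by a normalization-and-degeneration argument forcing the limit into $H$\,---\,is exactly where the work lies, and your sketch of it is not yet a proof (in particular, the passage from ``some rescaled limit lands in $H$ on an open set'' to ``$f(D\setminus S)\subset H$'' needs the rescaling constants to be tied back to the original representations in a way you have not made precise). If you want to complete this, consult Fujimoto's paper directly; the present paper will not help, since it simply invokes the result.
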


Eremenko gave the following interesting result wherein every holomorphic mapping $f$ from the complex plane
$\mathbb{C}$ into a projective variety in $\mathbb{P}^n$ becomes a constant mapping 
provided $f$ omits a finite number of certain hypersurfaces. 
 
\begin{lemma}[{\cite[Theorem 1]{Eremenko99}}]\label{L: Eremenko-hyperbolic}
  Let $X\subset\mathbb{P}^n$ be a projective variety, and $N$ a positive integer.
  Let $H_1,\dots, H_{2N+1}$ be $2N+1$ hypersurfaces in $\mathbb{P}^n$ with the property
  \begin{equation*}
    X\cap\Big(\bigcap\nolimits_{k\in I}H_k\Big)= \emptyset \,\,\,
    \text{ for every } I\subset \{1,\dots, 2N+1\} \text{ such that } |I| =N+1,
  \end{equation*} 
  where $|I|$ is the cardinality of the set $I$. Then, every holomorphic mapping from 
  $ \mathbb{C}$ into $ X\setminus\big(\cup_{k=1}^{2N+1} H_k\big)$ is constant. 
\end{lemma}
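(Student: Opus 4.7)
The plan is to argue by contradiction: assume a non-constant holomorphic $f : \mathbb{C} \to X \setminus \bigcup_{k=1}^{2N+1} H_k$ exists and derive a contradiction via a Second Main Theorem for hypersurfaces in subgeneral position.

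First I would reduce to the essential case. Let $Y$ be the Zariski closure of $f(\mathbb{C})$ in $X$; since $f$ is non-constant, $Y$ is an irreducible projective subvariety of dimension $d \geq 1$. The hypothesis on $X$ is inherited by $Y$: any $N+1$ of the $H_k$ meet $Y$ in the empty set. In particular, no single $H_k$ can contain $Y$, because otherwise $f(\mathbb{C}) \subset Y \subset H_k$ would contradict the omission. Therefore each $H_k \cap Y$ is a proper algebraic subvariety of $Y$, and the collection $\{H_k \cap Y\}_{k=1}^{2N+1}$ is in \emph{$N$-subgeneral position on $Y$}.

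Next, because $\mathbb{C}$ is Stein, $f$ admits a global reduced representation $\widetilde f = (f_0,\ldots,f_n)$. For each homogeneous defining polynomial $P_k$ of $H_k$, the entire function $P_k(\widetilde f)$ is nowhere vanishing, so the counting function $N_f(r, H_k) \equiv 0$ and the Nevanlinna defect satisfies $\delta_f(H_k) = 1$. Summing gives $\sum_{k=1}^{2N+1}\delta_f(H_k) = 2N+1$. On the other hand, the Nochka-type Second Main Theorem, applied to the Zariski-dense curve $f : \mathbb{C} \to Y$ against hypersurfaces in $N$-subgeneral position on a projective variety of dimension $d$, yields a defect bound of the form
\[
  \sum_{k=1}^{2N+1}\delta_f(H_k) \;\leq\; 2N - d + 1 \;\leq\; 2N,
\]
so $2N+1 \leq 2N$, a contradiction, and $f$ must be constant.

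The main obstacle is invoking the Second Main Theorem in precisely the right form: one needs the Nochka-type sharpening that handles $N$-subgeneral (not just general) position, and with an arbitrary projective ambient $Y$ rather than $\mathbb{P}^d$. Eremenko's original paper bypasses the heavy Cartan--Ru--Nochka machinery by a bespoke subharmonic / potential-theoretic construction that yields the same defect bound by hand; reproducing that direct argument is where the real work lies. The defect bookkeeping above nevertheless captures the reason the sharp number of hypersurfaces in the statement is $2N+1$\,---\,exactly one more than the bound $2N$ allowed in $N$-subgeneral position on a positive-dimensional $Y$.
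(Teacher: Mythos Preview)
The paper does not give a proof of this lemma; it is quoted directly from Eremenko's paper \cite{Eremenko99} and used as a black box in Section~\ref{S:mainthm2n+1}. So there is no ``paper's own proof'' to compare against.

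That said, your sketch has a real gap precisely where you yourself flag it. The Nochka--Cartan Second Main Theorem, in the form that yields the defect bound $\sum_k \delta_f(H_k) \leq 2N - d + 1$, is established for \emph{hyperplanes} in $N$-subgeneral position in $\mathbb{P}^d$ against a linearly non-degenerate curve. You are invoking it for arbitrary \emph{hypersurfaces} in $N$-subgeneral position on an arbitrary irreducible projective variety $Y$ against a merely Zariski-dense curve. Each of those generalizations (hypersurfaces instead of hyperplanes; ambient $Y$ instead of $\mathbb{P}^d$; algebraic non-degeneracy instead of linear non-degeneracy) is substantial, and in 1999 no such theorem was available; even now the precise combination you need is delicate. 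So the line ``the Nochka-type Second Main Theorem \ldots\ yields a defect bound'' is an assertion, not an argument, and it carries all the weight of the proof.

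Eremenko's actual argument is quite different in spirit and far more elementary. He observes that the intersection hypothesis forces, at every point of $X$, at least $N+1$ of the defining polynomials $P_k$ to be bounded away from zero (since at most $N$ of the $H_k$ can pass through any given point of $X$). A pigeonhole/combinatorial step then lets him bound a suitable power of $\|\widetilde f\|$ by a product of finitely many of the $|P_k(\widetilde f)|$, uniformly on $X$. Since each $P_k(\widetilde f)$ is a zero-free entire function, $\log|P_k(\widetilde f)|$ is harmonic; the resulting inequality makes $\log\|\widetilde f\|$ dominated by a harmonic function, forcing the characteristic $T_f(r)$ to be bounded and hence $f$ constant. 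No Second Main Theorem enters at all. Your closing paragraph correctly identifies that this is ``where the real work lies''; the defect heuristic you give is the right intuition for why $2N+1$ is the threshold, but it is not a proof.
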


\section{The proof of Theorem~\ref{T: mainthm2n+1}}\label{S:mainthm2n+1}

Certain parts of our proof of Theorem~\ref{T: mainthm2n+1} will rely on 
techniques similar to those in \cite{Dethloff,Quang Tan08}. Since our proof will, at a certain
stage, rely upon results involving the convergence of divisors, we shall rephrase Condition~$(c)$
in a form that involves the supports of the divisors $g_f^{-1}(H_k)$, $k = 1,\dots, 2n+1$.
With these words, we give the

\begin{proof}[Proof of Theorem\,\ref{T: mainthm2n+1}]

Let $\{f_j\}\subset\mathcal{F}$  be  an arbitrary sequence. By the hypothesis of the theorem,
 there exist:
\begin{itemize}
  \item { a sequence $\{g_j\}\subset\mathcal{G}$;}

  \item { $2n+1$ sequences of 
  hyperplanes $\left\{H_{k,\,j}\right\}$, $k=1,\ldots, 2n+1$; and}
  
  \item { $2n+1$ hyperplanes $H_1,\dots, H_{2n+1}$;}
\end{itemize}
such that for each $j$ and each $k$
\begin{equation*}
 f_j(D)\not\subset H_{k,\,j} \text{ and } g_j(D)\not\subset H_{k,\,j}\,; \; \;
 D(H_{1,\,j},\ldots,\,H_{2n+1,\,j})> 0;
 \text{ and }
\end{equation*} 
\begin{equation}\label{Eq: support}
  \mathsf{supp}\,\nu(f_j,\,H_{k,\,j})\subseteq \mathsf{supp}\,\nu(g_j,\,H_{k,\,j});
\end{equation}
and such that the hyperplane $H_k$ is a limiting hyperplane of
$\left\{H_{k,\,j}\right\}$, $k=1,\ldots, 2n+1$. To be more precise: it follows
from the discussion in Section~\ref{S:notions} that there exists
an increasing sequence $\{j_{\mu}\}\subset \mathbb{N}$
such that
$H_k$ is the limit of $\left\{H_{k,\,j_{\mu}}\right\}$, $k = 1,\dots, 2n+1$. 
We also record the following, which will be relevant to the proof of Theorem~\ref{T: finitefamily}:
\begin{itemize}
  \item[$(*)$] If $\mathcal{H}$ is a discrete collection of hyperplanes, then
  the sequences $\{H_{k,\,j_{\mu}}\}$ are constant subsequences.
\end{itemize}
Owing to the condition $(b)$, the hyperplanes $H_1,\ldots, H_{2n+1}$ are in general
position in $\mathbb{P}^n$. This follows from the definition of $D(H_1,\dots, H_{2n+1})$
and the quantitative discussion on limiting hyperplanes, of a sequence of hyperplanes, 
in Section~\ref{S:notions}.
\smallskip

Since the family $\mathcal{G}$ is normal, there exists a subsequence of $\{g_{j_\mu}\}$ that
converges compactly to a holomorphic mapping $g$.
By hypothesis,
$g(D)\not\subset H_k,\ k=1,\ldots, 2n+1$.
Set 
\begin{equation*}
S_k := \mathsf{supp}\,\nu(g,\,H_k),\ k=1,\dots, 2n+1,
\end{equation*}
and
\begin{equation*}
E:=\bigcup_{k=1}^{2n+1} S_k.
\end{equation*} 
By Result~\ref{R:divisor-supp}, $E$ is either an empty set or an analytic subset of  codimension 1 in $D$.
\smallskip

Fix a point $z_0\in D\setminus E$ and 
choose a relatively compact open neighborhood $U_{z_0}$ of
$z_0$ in $D\setminus E$.
We now pass to that subsequence of $\{f_j\}$ that is indexed by the integer-sequence that indexes
the subsequence of $\{g_{j_\mu}\}$ introduced in the previous paragraph. At this stage, we may,
without loss of generality, relabel the two sequences referred to as $\{g_j\}$ and $\{f_j\}$. With
this relabelling, we conclude\,---\,in view of \eqref{Eq: support} and the fact that
$U_{z_0}\cap S_k=\emptyset$\,---\,that $U_{z_0}\cap\mathsf{supp}\,\nu\left(f_j,\,H_{k,\,j}\right)
=\emptyset$ for every $j$ and every $k$. Hence 
$\big\{\!\left.f_j\right|_{U_{z_0}}\!\big\}\subset \operatorname{Hol}\left(U_{z_0},\,\mathbb{P}^n\right) $.
\smallskip

We shall now prove that the sequence $\big\{\!\left.f_j\right|_{U_{z_0}}\!\big\}$ has a subsequence
that converges compactly on $U_{z_0}$. Let us assume that the latter is
not true, and aim for a contradiction. Then, by Lemma~\ref{L: AK}, there exist
\begin{enumerate}
  \item[(i)] a subsequence of $\big\{\!\left.f_j\right|_{U_{z_0}}\!\big\}$, which we may label
  without causing confusion\,---\,{\bf just} for this paragraph\,---\,as
  $\big\{\!\left.f_j\right|_{U_{z_0}}\!\big\}$;
  
  \item[(ii)] a point $\xi_0\in U_{z_0}$ and a
  sequence $\{\xi_j\}\subset U_{z_0}$ such that $\xi_j\rightarrow \xi_0$;

\item[(iii)] a sequence  $\{r_j\}\subset \mathbb{R}$ with $r_j>0$ such that $r_j\rightarrow0$; and

\item[(iv)]  a sequence  $\{u_j\}$ of unit vectors in $\mathbb{C}^m$
\end{enumerate}
such that\,---\,defining the maps $h_j: \zeta\mapsto f_j(\xi_j+r_j u_j \zeta)$ on suitable
neighborhoods of $0\in \mathbb{C}$\,---\,$\{h_j\}$
converges uniformly on compact subsets of $\mathbb{C}$ to a non-constant
holomorphic mapping $h:\mathbb{C}\rightarrow\mathbb{P}^n$. Then, there exist admissible representations
\begin{equation*} \widetilde{h}_j = \big(h_{j,\,0}, h_{j,\,1},\ldots, h_{j,\,n}\big)  \text{ and }
  \widetilde{h} = \big(h_0, h_1,\ldots, h_n\big)
\end{equation*}
of $h_j$ and $h$ respectively such that $\left\{h_{j,\,l}\right\}$ converge uniformly on compact subsets
of $\mathbb{C}$ to $h_l$, $0\leq l\leq n$.
This implies that $\big\{{h}_j^*H_{k,\,j}\big\}$ converge
uniformly on compact subsets of $\mathbb{C}$ to $h^*H_k$,
$1\leq k\leq 2n+1$. Recall that we defined $f^*H$, where $H$ is any hyperplane,
in Section~\ref{S:notions}.
By Hurwitz's theorem, and the
fact that for each $k\in\{1,\ldots, 2n+1\},$ 
$U_{z_0}\cap\mathsf{supp}\,\nu\left(f_j,\,H_{k,\,j}\right)=\emptyset$
for every $j$, one of the following holds for each $k$:
\begin{enumerate}
  \item $h(\mathbb{C})\subset H_k$, or
    
  \item $h(\mathbb{C})\cap H_k=\emptyset$.
\end{enumerate}
Now, let $J:=\left\{k\in\{1,\ldots, 2n+1\}\,\big|\,h(\mathbb{C})\subset H_k\right\}$, and
\begin{equation*} 
  Z:=\begin{cases}
         \bigcap_{k\in\,J}\!H_k, &\text{if $J\neq\emptyset$}, \\
         \mathbb{P}^n, &\text{if $J=\emptyset$}.
       \end{cases}
\end{equation*}
We now make the following {\bf claim:}
\begin{itemize}
  \item $|J^c|\geq 2\dim_{\mathbb{C}}Z + 1$; and
  \item for each $I\subset J^c$ with $|I|
  = \dim_{\mathbb{C}}Z + 1$, $Z\cap\left( \bigcap_{k\in\,I}H_k\right)=\emptyset$.
\end{itemize}
If $J=\emptyset$, then we are done because in that case $Z=\mathbb{P}^n$, and 
the hyperplanes $H_1,\ldots,\,H_{2n+1}$ are in general position. Now, we consider the case
when $J\neq\emptyset$. Since the hyperplanes 
$H_1,\ldots,\,H_{2n+1} $ are in general position, we have:

\begin{itemize}
  \item[{}] For each $1\leq t\leq 2n+1$ and $I\subset \{1,\ldots, 2n+1\}$ with $|I|=t$,
  $\bigcap_{k\in\,I}\!H_k$ is of pure dimension equal to $\dim_{\mathbb{C}}\big(\bigcap_{k\in\,I}\!H_k\big)
  =\max\{n-t,\,-1\}$ (here, it is understood that
  $\dim_{\mathbb{C}}(\emptyset)=-1)$.
\end{itemize}
The above follows from B{\'e}zout's theorem. Thus, we get
\begin{equation*}
  \dim_{\mathbb{C}}Z =\dim_{\mathbb{C}}\Big(\bigcap\nolimits_{k\in\,J}H_k\Big) = \max\{n-|J|,\,-1\}.
\end{equation*}
Since $h(\mathbb{C})\subset Z$, $\dim_{\mathbb{C}}Z\geq 0$, whence we get
\begin{equation}\label{Eq: |J|}
  |J|=n-\dim_{\mathbb{C}}Z.
\end{equation}
Hence,
\begin{equation*}
  |J^c| = n+\dim_{\mathbb{C}}Z+1 \geq 2\dim_{\mathbb{C}}Z+1.
\end{equation*}
The inequality above is, again, a consequence of $H_1,\ldots,\,H_{2n+1} $ being in general position.
Moreover, if $I\subset J^c$ with $|I|=\operatorname{dim}_{\mathbb{C}}Z+1$, then by \eqref{Eq: |J|},
\begin{equation*}
  |I\cup J|=(\dim_{\mathbb{C}}Z+1)+(n-\dim_{\mathbb{C}}Z)=n+1,
\end{equation*}                                                                            
and
\begin{equation*}
   Z\cap\Big(\bigcap\nolimits_{k\in\,I}H_k\Big) = \Big(\bigcap\nolimits_{k\in\,J}H_k\Big)
   \cap \Big(\bigcap\nolimits_{k\in\,I}H_k\Big) = \bigcap\nolimits_{k\in\,I\,\cup\,J}H_k = \emptyset,
\end{equation*}
where the last equality holds because the hyperplanes $H_1,\ldots, H_{2n+1}$ are 
in general position. This establishes the above claim.
At this stage, we can appeal to Lemma~\ref{L: Eremenko-hyperbolic}, to conclude that $h$ is constant, which is a
contradiction. Thus, it follows that the sequence $\big\{\!\left.f_j\right|_{U_{z_0}}\!\big\}$\,---\,by which
we now mean the sequence considered at the beginning of this paragraph\,---\,has a subsequence
that converges compactly on $U_{z_0}$. Let us denote the limit of this subsequence by 
$f^{U_{z_0}}$.
\smallskip

Cover $D\setminus E$ by a countable collection of connected open subsets of $D\setminus E$.
Note that the point $z_0\in D\setminus E$ and the neighborhood $U_{z_0}\ni z_0$ in
the last paragraph were arbitrary (with the proviso $U_{z_0}\cap E = \emptyset$). Thus, from
the conclusion of the last paragraph and by a standard diagonal argument, we conclude that
there exists a subsequence of the {\bf original} sequence $\{f_j\}$\,---\,which we can again denote
simply by $\{f_j\}$\,---\,and a holomorphic map $f$ on $D\setminus E$ such that $\{f_j\}$
converges compactly on $D\setminus E$ to $f$.
\smallskip 
  
We shall now show that there is a subsequence of $\left\{f_j\right\}$ 
that is meromorphically convergent in $D$. 
Since the hyperplanes $H_1,\ldots, H_{2n+1}$ are in general position in $\mathbb{P}^n$, there exists a 
subset  $L\subset\{1,\ldots, 2n+1\}$ with $|L|\geq n+1$ such that $f^*H_k\not\equiv0$ 
on $D\setminus E$ for all $k\in L$. 
Let $z_1\in D$ be an arbitrary point. By the general-position condition, again, there
exist an open ball $B(z_1,\epsilon)\subset D$
and $k_0\in L$ such that 
$\mathsf{supp}\,\nu(g,\,H_{k_0})\cap\overline{B(z_1,\epsilon)}=\emptyset$. This implies
that by passing to appropriate subsequences once more\,---\,and relabelling them by 
$j\in \mathbb{N}$\,---\,we have 
subsequences $\{f_j\}$, $\{g_j\}$ and $\{H_{k_0,\,j}\}$ such that, firstly,
$\mathsf{supp}\,\nu(g_j,\,H_{k_0,\,j})\cap\overline{B(z_1,\epsilon)}=\emptyset.$
Then, by \eqref{Eq: support}, we have 
\begin{equation}\label{Eq: suppfjempty}
  \mathsf{supp}\,\nu(f_j,\,H_{k_0,\,j})\cap\overline{B(z_1,\epsilon)}=\emptyset.
\end{equation}

Now, we define meromorphic mappings $F_j: B(z_1,\epsilon)\rightarrow\mathbb{P}^{n+1}$, $j\in\mathbb{N}$, 
as follows: for any $z\in B(z_1,\epsilon)$, if
$\widetilde{f}_j=(f_{j,\,0}, f_{j,\,1},\ldots, f_{j,\,n})$ is an admissible representation
of $f_j$ on a neighborhood
$U_z\subset B(z_1,\epsilon)$, then $F_j$ is such that it has the admissible
representation $\widetilde{F}_j= (f_{j,\,0}, f_{j,\,1},\dots, f_{j,\,n}, f_j^*H_{k_0,\,j})$ 
on $U_z$. Let $Q_l$, $l=0, 1,\dots, n$, be
hyperplanes in $\mathbb{P}^n$ defined by
\begin{equation*}
  Q_l := \big\{[w_0: w_1:\cdots: w_{n}]\in\mathbb{P}^n\,\big|\,w_l=0\big\},
\end{equation*}
and let $\overline{Q}_l$, $l=0, 1,\dots, n+1$, be hyperplanes in $\mathbb{P}^{n+1}$
defined by
\begin{equation*}
  \overline{Q}_l := \big\{[w_0: w_1:\cdots: w_{n+1}]\in\mathbb{P}^{n+1}\,\big|\,w_l=0\big\}.
\end{equation*}
Clearly, $\{F_j\}$ converges compactly on $B(z_1,\epsilon)\setminus E$ to a holomorphic
mapping $F$ from $B(z_1,\epsilon)\setminus E$ into
$\mathbb{P}^{n+1}$, and if $\widetilde{f}= \big(f_0,\dots, f_n\big)$ is an admissible representation
of $f$ on an open subset $U\subset B(z_1,\epsilon)\setminus E$, then $F$ has an admissible
representation $\widetilde{F}= (f_0,\,f_1,\dots, f_n, f^*H_{k_0})$ on $U$. Since $f$
is holomorphic on $B(z_1,\epsilon)\setminus E$, there exists  $l_0$, $0\leq l_0\leq n$ such
that $f^*Q_{l_0}\not\equiv0$ on
$B(z_1,\epsilon)\setminus E$. Hence, $F^*\overline{Q}_{l_0}\not\equiv0$ on
$B(z_1,\epsilon)\setminus E.$  
Therefore, there exists a $j_0\in \mathbb{N}$
such that $f_j^*Q_{l_0}\not\equiv 0$ and $F_j^*\overline{Q}_{l_0}\not\equiv 0$ on
$B(z_1,\epsilon)\setminus E$ for all $j\geq j_0$.
Since $f^*H_{k_0}\not\equiv 0$ on $B(z_1,\epsilon)\setminus E$, we have $F^*\overline{Q}_{n+1}\not\equiv 0$
on $B(z_1,\epsilon)\setminus E.$ Also, by \eqref{Eq: suppfjempty}, we have $\nu(F_j, \overline{Q}_{n+1})=0.$ 
Therefore by Lemma \ref{L: Fujimotomeronormalprop}, $\{F_j\}$  
converges meromorphically  on $B(z_1,\epsilon)$. This implies that the sequence of divisors 
$\big\{\nu(F_j,\,\overline{Q}_{l_0})\big\}_{j\geq\,j_0}$ converges on $B(z_1,\epsilon),$ hence 
$\big\{\nu(f_j,\, Q_{l_0})\big\}_{j\geq\,j_0}$ converges on $B(z_1, \epsilon).$ By 
Lemma~\ref{L: Fujimotomeronormalprop} again, $\{f_j\}_{j\geq\,j_0}$ is meromorphically 
convergent on $B(z_1, \epsilon).$ By a diagonal argument\,---\,with details analogous to the
argument made above\,---\,we extract a further subsequence that is meromorphically convergent on $D$ to a
meromorphic mapping 
which agrees with $f$ on $D\setminus E$. This completes the proof.
\end{proof}

\section{The proofs of Theorem~\ref{T: finitefamily} and its corollaries}\label{S:finitefamily}

\begin{proof}[Proof of Theorem~\ref{T: finitefamily}]
Let $\{f_j\}\subset\mathcal{F}$  be  an arbitrary sequence. We can find
$2n+1$ sequences of hyperplanes $\left\{H_{k,\,j}\right\}$, $k=1,\ldots, 2n+1$, associated
to $\{f_j\}$ exactly as in the beginning of the proof of Theorem~\ref{T: mainthm2n+1}.
The purpose of Condition~$(b)$ of Theorem~\ref{T: mainthm2n+1} was to ensure that the
hyperplanes $H_1,\dots, H_{2n+1}$ presented right at the beginning of the proof of
Theorem~\ref{T: mainthm2n+1} are in general position. As highlighted by $(*)$ in
the proof of Theorem~\ref{T: mainthm2n+1}: in our present setting
(i.e., $\mathcal{H}$ is discrete) $H_k$ is just the single hyperplane 
repeated {\it ad infinitum} that constitutes the constant sequence
$\{H_{k,\,j_{\mu}}\}$ referred to by $(*)$, $k = 1,\dots, 2n+1$.
Now, $H_1,\dots, H_{2n+1}$ being 
in general position follows from $(b)$ in the statement of Theorem~\ref{T: finitefamily}.
\smallskip

Since the family $\mathcal{G}$ is a finite family, we can find a meromorphic mapping $g\in \mathcal{G}$
such that we can extract a subsequence of $\{f_j\}$, which we can continue to label as
$\{f_j\}$ without loss of generality, such that
\begin{equation}\label{Eq: support 2}
  \mathsf{supp}\,\nu(f_j,\,H_{k,\,j})\subseteq \mathsf{supp}\,\nu(g,\,H_{k,\,j})
\end{equation}
for each $j$ and each $k$.
By hypothesis, $g(D)\not\subset H_k$ for all $k\in\{1,\dots, 2n+1\}$.
Now, define the sets $S_k$ and $E$ as they were defined in 
the proof of Theorem~\ref{T: mainthm2n+1}.
Thus, we have all of the ingredients to be able to repeat verbatim the argument beginning from the third
paragraph of the proof of Theorem~\ref{T: mainthm2n+1} until the fifth paragraph thereof
to extract from $\{f_j\}\subset \mathcal{F}$ a subsequence\,---\,which we may continue, without loss
of generality, to refer to as $\{f_j\}$\,---\,that converges compactly on $D\setminus E$ to a map $f$
that is holomorphic on $D\setminus E$.
\smallskip

Since the hyperplanes $H_1,\ldots, H_{2n+1}$ are in general position in $\mathbb{P}^n$, there exists a 
subset  $L\subset\{1,\ldots, 2n+1\}$ with $|L|\geq n+1$ such that $f^*H_k\not\equiv0$ 
on $D\setminus E$ for all $k\in L$. 
\smallskip

We shall now show that there is a subsequence of $\left\{f_j\right\}$ 
that is meromorphically convergent in $D\setminus \bigcap_{k\in\,L} S_k$. 
Let $z_1\in D\setminus\bigcap_{k\in\,L} S_k$ be an arbitrary point. There
exist an open ball $B(z_1,\epsilon)\subset D\setminus\bigcap_{k\in\,L} S_k$
and $k_0\in L$ such that $S_{k_0}\cap\overline{B(z_1,\epsilon)}=\emptyset.$ Hence, 
$\mathsf{supp}\,\nu(g,\,H_{k_0})\cap\overline{B(z_1,\epsilon)}=\emptyset$. This implies
that by passing to appropriate subsequences once more\,---\,and relabelling them by 
$j\in \mathbb{N}$\,---\,we have 
subsequences $\{f_j\}$, $\{g_j\}$ and $\{H_{k_0,\,j}\}$ such that, firstly,
$\mathsf{supp}\,\nu(g_j,\,H_{k_0,\,j})\cap\overline{B(z_1,\epsilon)}=\emptyset.$
Then, by \eqref{Eq: support 2}, we have 
\begin{equation}\label{Eq: suppfjempty 2}
  \mathsf{supp}\,\nu(f_j,\,H_{k_0,\,j})\cap\overline{B(z_1,\epsilon)}=\emptyset.
\end{equation}

We are again in a position to repeat the argument in the
final paragraph of the proof of Theorem~\ref{T: mainthm2n+1}, {\it mutatis mutandis}, to conclude
that $\{f_j\}$ has a subsequence\,---\,which we may again relabel as $\{f_j\}$\,---\,that
is meromorphically convergent on $D\setminus \bigcap_{k\in\,L}S_k$ to a
meromorphic mapping $\varphi$ 
(which agrees with $f$ on $D\setminus E$). 
\smallskip

We now claim that the indeterminacy set $I(g)=\bigcap_{k\in\,L} S_k$. We remark here
that this claim presents {\bf no} contradiction when $m=1$, in which
case $g$ is holomorphic. This is because, by holomorphicity of $g$, we get
\[
  S_k = g^{-1}(H_k) \quad\text{and}
  \quad \bigcap\nolimits_{k\in L}S_k = \emptyset.
\]
The latter is a consequence of the general-position condition, which here translates into
$\{H_k : k\in L\}$ being a set of distinct points. At any rate, the following
argument is not much different and holds for any $m$. Clearly,
$I(g)\subset\bigcap_{k\in\,L} S_k.$ Since 
$|L|\geq n+1$, and $H_k$, $k\in L$, are in general position in $\mathbb{P}^n$, we have
$\bigcap_{k\in\,L} S_k\subset I(g)$\,---\,hence the claim. Now, the meromorphic normality of
$\mathcal{F}$ on $D$ follows from the following argument:
Take a hyperplane $H$ in $\mathbb{P}^n$ such that $\varphi(D\setminus\bigcap_{k\in\,L}S_k)\not\subset H$. Clearly,
$\left\{\nu(f_j,\,H)\right\}$ converges on $D\setminus\bigcap_{k\in\,L}S_k$. Since the codimension
of the set $\bigcap_{k\in\,L}S_k=I(g)$ is at least 2, we have by
Result~\ref{R: extndivisor} that $\left\{\nu(f_j,\,H)\right\}$ converges on
$D$. Therefore by Lemma~\ref{L: Fujimotomeronormalprop}, $\left\{f_j\right\}$ converges
meromorphically on $D$. This completes the proof.
\end{proof}

The proofs of Corollaries~\ref{C: Share 2n+1}~and~\ref{C: Mon-Cara} are absolutely direct
applications. For the sake of completeness, we now provide their proofs: 

\begin{proof}[Proof of Corollary~\ref{C: Share 2n+1}]
Take, and fix, an arbitrary mapping from the family $\mathcal{F}$ and name it $g$. Define
$\mathcal{G}:= \{g\}$ and $\mathcal{H}:= \{H_1,\dots, H_{2n+1}\}$. 
Now, the meromorphic normality of $\mathcal{F}$ is evident from the Theorem~\ref{T: finitefamily}.
\end{proof}

\begin{proof}[Proof of Corollary~\ref{C: Mon-Cara}]
Notice that for each $k\in\{1,\dots, 2n+1\}$, and each pair of mappings $f, g\in\mathcal{F}$, 
$\mathsf{supp}\,\nu(f,\,H_{k})=\mathsf{supp}\,\nu(g,\,H_{k})=\emptyset.$ Hence, 
by Corollary~\ref{C: Share 2n+1}, $\mathcal{F}$ is meromorphically normal.
\end{proof}

\section{The proof of Theorem~\ref{T: mainthm3n+1}}\label{S:mainthm3n+1}

\begin{proof}[Proof of Theorem~\ref{T: mainthm3n+1}]

Let $\{f_j\}\subset \mathcal{F}$ be an arbitrary sequence. By hypothesis, there exist 
\begin{itemize}
  \item a sequence $\{g_j\}\subset \mathcal{G}$; and

  \item $3n+1$ sequences of hyperplanes $\{H_{k,\,j}\}$, $k= 1,\dots, 3n+1$;
\end{itemize}
such that for each $j$ and each $k$
\begin{equation*}
  f_j(D)\not\subset H_{k,\,j} \text{ and } g_j(D)\not\subset H_{k,\,j}; \; \;
  D(H_{1,\,j},\dots,\,H_{3n+1,\,j})>0; \text{ and }
\end{equation*} 
\begin{equation}\label{Eq: thm2support}
  \mathsf{supp}\,\nu(f_j,\,H_{k,\,j})\subseteq \mathsf{supp}\,\nu(g_j,\,H_{k,\,j}).
\end{equation}
Let each $H_{k,\,j}$ have the representation 
\begin{equation*}
  H_{k,\,j}:= 
  \left\{[w_0: w_1: \cdots: w_n]\in\mathbb{P}^n\,\big| 
  \sum\nolimits_{l=0}^{n} a_{j,\,l}^{(k)}w_l=0\right\},\ k= 1,\dots, 3n+1.
\end{equation*}
As in the proof of Theorem~\ref{T: mainthm2n+1}, we get $3n+1$ hyperplanes with the representations
\begin{equation*}
  H_k:= 
  \left\{[w_0: w_1:\cdots: w_n]\in\mathbb{P}^n\,\big| 
  \sum\nolimits_{l=0}^{n} a_{l}^{(k)}w_l=0\right\}, \ k= 1,\dots, 3n+1.
\end{equation*}
In view of the condition $(b)$, the hyperplanes $H_1,\dots, H_{3n+1}$ 
are in general position in $\mathbb{P}^n$.
\smallskip

Again as in the proof of the Theorem~\ref{T: mainthm2n+1}, we can extract a subsequence of $\{g_j\}$
that converges compactly to a holomorphic mapping $g$. Since $H_1,\dots, H_{3n+1}$ are in general 
position in $\mathbb{P}^n$, we get a subset $L\subset\{1,\dots, 3n+1\}$ with $|L|\geq 2n+1$ 
such that $g(D)\not\subset H_k$, for all $k\in L$. Set 
\begin{equation*}
  S_k:= \mathsf{supp}\,\nu(g, H_k),\ k\in L,
\end{equation*}
and
\begin{equation*}
  E:= \bigcup_{k\in L} S_k.
\end{equation*}
By Result~\ref{R:divisor-supp}, $E$ is either an empty set or an analytic subset of 
codimension 1 in $D$. The proof follows along the same lines as the argument beginning
from the third paragraph of the proof of Theorem~\ref{T: mainthm2n+1}.
\end{proof}

\section*{Acknowledgements}

I would like to express my sincere gratitude to Gautam Bharali for helpful 
discussions during the course of this work. I would like to thank him especially for his
suggestions towards strengthening some of the results in this paper.
This work is supported by the SERB National Post-doctoral Fellowship (N-PDF)
(Grant no.~PDF/2017/001140) and a UGC CAS-II grant (Grant no.~F.510/25/CAS-II/2018(SAP-I)).


\begin{thebibliography}{88}

\bibitem{AladroKrantz} 
G. Aladro and S. G. Krantz, {\em A criterion for normality in $\mathbb{C}^n$}, {J. Math. Anal. Appl.} 
{\bf 161} (1991), 1-8.
 
\bibitem{Dethloff}
G. Dethloff, D. D. Thai, and P. N. T. Trang, {\em Normal families of meromorphic mappings of several
complex variables for moving  hypersurfaces in a complex projective space}, 
{Nagoya Math. J.} {\bf 217} (2015), 23-59.

\bibitem{Dufresnoy}
J. Dufresnoy, {\em Th{\'e}orie nouvelle des familles complexes normales:
{A}pplications {\`a} l'{\'e}tude des fonctions alg{\'e}bro{\"i}des}, 
{Ann. Sci. \'{E}cole Norm. Sup. (3)} {\bf 61} (1944), 1-44.
 
\bibitem{Eremenko99}
A. Eremenko, {\em A Picard type theorem for holomorphic curves}, {Period. Math. Hungar.}  {\bf 38}
 (1999), 39-42.
  
\bibitem{Fujimoto74} 
H. Fujimoto,  {\em On families of meromorphic maps into the complex projective space},
{Nagoya Math. J.} {\bf 54} (1974), 21-51.

\bibitem{Kiernan Kobayashi73}
P. Kiernan and S. Kobayashi, {\em Holomorphic mappings into projective space with lacunary
hyperplanes}, {Nagoya Math. J.} {\bf 50} (1973), 199-216.
 
\bibitem{Quang Tan08}
S. D. Quang and T. V. Tan, {\em Normal families of meromorphic mappings of several complex  variables into
$\mathbb{C}P^n$ for moving hypersurfaces}, {Ann. Polon. Math.} {\bf 94} (2008), 97-110.

\bibitem{Rutishauser50}
H. Rutishauser, {\em {\"U}ber Folgen und Scharen von analytischen und meromorphen
Funktionen mehrerer Variabeln, sowie von analytischen Abbildungen}, {Acta Math.} {\bf 83}
(1950), 249-325.
 
\bibitem{Stoll}
W. Stoll, {\em Normal families of non-negative divisors}, {Math. Z.} {\bf 84} (1964), 154-218.
 
\bibitem{Thai Trang Huong 03}
D. D. Thai, P. N. T.  Trang, and P. D. Huong, {\em Families of normal maps in several complex
variables and hyperbolicity of complex spaces}, { Complex Var. Theory Appl.} {\bf 48} (2003), 469-482.

\bibitem{Yang Fang Pang 2014}
L. Yang, C. Fang, and X. Pang, {\em Normal families of holomorphic mappings into
complex projective space concerning  shared hyperplanes}, {Pacific J. Math.} {\bf 272} (2014), 245-256.

\bibitem{Yang Liu Pang 2016}
L. Yang, X. Liu, and X. Pang, {\em On families of  meromorphic maps into the complex projective space },
{Houston J. Math.} {\bf 42} (2016), 775-789.
 
\end{thebibliography}
\end{document}